\newcommand{\R}{\mathbb{R}}
\newcommand{\N}{\mathbb{N}}
\newcommand{\C}{\mathbb{C}}
\newcommand{\D}{\mathcal{D}}
\newcommand{\M}{\mathcal{M}}
\newcommand{\e}{{\rm e}}
\newcommand{\supp}{{\rm supp}}
\newtheorem{proposition}{Proposition}[section]
\newtheorem{theorem}[proposition]{Theorem}
\newtheorem{lemma}[proposition]{Lemma}
\theoremstyle{definition}
\newtheorem{definition}[proposition]{Definition}
\numberwithin{equation}{section}
\begin{document}

\pagestyle{myheadings}
\thispagestyle{plain}
	\markboth{AMC, GGG, JAG and SR}{}
		\title{Chaos for generalized Black-Scholes equations}
\author{Anna Maria Candela\thanks{Dipartimento di Matematica, Università degli Studi di Bari Aldo Moro,
			Via E. Orabona 4, 70125 Bari, Italy. 
			Emails: $^1$annamaria.candela@uniba.it,
			$^4$silvia.romanelli@uniba.it} $^{,1}$ \,
 Gisèle Ruiz Goldstein\thanks{Department of Mathematical Sciences, University of Memphis,  
Memphis, TN 38152-3240, USA. 
Emails: $^2$ggoldste@memphis.edu, 
$^3$jgoldste@memphis.edu} $^{,2}$  \\
Jerome A. Goldstein\footnotemark[2] $^{,3}$ \, 
Silvia Romanelli\footnotemark[1] $^{,4}$}
\date{}
	
\maketitle


\begin{abstract}
The Nobel Prize winning Black-Scholes equation 
for stock options and the heat equation 
can both be written in the form 
\[
\frac{\partial u}{\partial t}=P_2(A)u,
\] 
where $P_2(z)=\alpha z^2+ \beta z+\gamma$ 
is a quadratic polynomial with $\alpha > 0$. In fact,
taking $A = x\frac{\partial}{\partial x}$ on functions 
on $[0,\infty) \times [0,\infty)$ the previous equality
reduces to the Black-Scholes equation, 
while taking $A = \frac{\partial}{\partial x}$ for functions 
on $\R \times [0,\infty)$
it becomes the heat equation. Here, we ``connect'' the two previous problems 
by considering the generalized operator 
$A= x^a\frac{\partial}{\partial x}$ for functions on $[0,\infty) \times [0,\infty)$ with $0<a<1$, and
our main result is that the corresponding degenerate parabolic equation 
is governed by a semigroup of operators which is chaotic on a class of Banach spaces. 
The relevant Banach spaces are weighted supremum norm spaces 
of continuous functions on $[0,\infty)$.  This paper unifies,
simplifies and significantly extends earlier results 
obtained for the Black-Scholes equation ($a=1$) in \cite{EGG} 
and the heat equation ($a=0$) in \cite{EGG1}.
\end{abstract}

\noindent
{\it \footnotesize 2020 Mathematics Subject Classification}. 
{\scriptsize 47D06, 47A16, 35K05, 35Q91, 91G80}.\\
{\it \footnotesize Key words}. {\scriptsize Hypercyclic semigroup, chaotic semigroup, 
Black-Scholes equation, heat equation, weighted supremum norm space, evolution equation}.


\section{Introduction}

In 2012 H. Emamirad, G. R. Goldstein and J. A. Goldstein in \cite{EGG} 
proved that the Nobel Prize winning stock options research of the 1970s 
led to a second order linear parabolic partial differential equation that is
governed by a one parameter semigroup of bounded linear operators which is
chaotic on certain weighted supremum norm Banach spaces. 
The equation, usually called the Black-Scholes equation, 
originally introduced 
by F. Black, R. C. Merton and M. Scholes in \cite{BS, M}, is
\begin{equation}\label{bms}
\frac{\partial u}{\partial t}=\frac{\sigma^2}{2} x^2 \frac{\partial^2 u}{\partial x^2}
+ r x\frac{\partial u}{\partial x} - ru,
\end{equation}
for $u(x,t)$, a function on $[0, \infty) \times [0, \infty)$. 
This function $u(x,t)$ is the price 
of the stock option when the price of the stock is $x$
and $t$ represents time.
The relevant initial condition in Economics
for \eqref{bms} is 
\[
u(x,t)=(x-p)_{+}
\] 
where $p>0$ is the strike price. Note that this initial condition forces us to focus on unbounded functions.

In \cite{GMR} the authors gave a simple explicit representation 
of the solutions of an abstract Cauchy problem associated with \eqref{bms}

In \cite{EGG} the authors noted that \eqref{bms} can be written as 
\begin{equation} \label{general}
	\frac{\partial u}{\partial t}=P_{2}(A)u
\end{equation} 
where $P_2(z)$ is the real quadratic polynomial 
$P_2(z)=\alpha z^2+\beta z + \gamma$, $\alpha >0$, 
and with $A=x \frac{\partial}{\partial x}$ 
for functions on $[0,\infty) \times [0,\infty)$ 
and appropriate choices of $\alpha$, $\beta$ and $\gamma$
which are related to the volatility and the interest rate.  

After the first systematic study of the notion of chaotic $(C_0)$ semigroup 
in \cite{DSW}, in \cite{EGG1} Emamirad, G. Goldstein and J. Goldstein proved that the heat 
equation was chaotic on certain weighted sup norm spaces 
by taking $A=\frac{\partial}{\partial x}$ in \eqref{general} 
for functions on $\R \times [0,\infty)$.

More recently in \cite{GGK}, 
under suitable assumptions, 
the property of chaos was also faced in the 
framework of a nonautonomous generalized version of \eqref{bms}.

Here we extend the autonomous theory to
\[
	A=\nu x^{a} \frac{\partial}{\partial x}
\]
for functions on $[0,\infty) \times [0,\infty)$ 
with $0 < a <1$, $\nu \ne 0$.  
In this case the corresponding partial differential 
equation \eqref{general} becomes
\[
\frac{\partial u}{\partial t}\ =\nu^{2} x^{2a}\frac{\partial^2 u}{\partial x^2}
+\nu^{2}ax^{2a-1}\frac{\partial u}{\partial x}
+ \beta \nu x^a\frac{\partial u}{\partial x} + \gamma u,
\quad x>0,\, t\ge 0,
\]
with $\alpha =1$ and $\beta$, $\gamma \in \R$. 
To keep the connection with the Black-Scholes problem (corresponding to $a=1$), 
we work on spaces of unbounded functions.

By understanding all underlying principles more clearly, 
we can simplify and clarify the earlier methods. 
Still, our proof is quite complicated and based 
on a lot of heavy machinery developed by others. 
Thus we hope our new and relatively 
selfcontained approach will attract mathematicians 
to use these new tools and advance rigorous mathematical finance even further.


\section{Properties of the Underlying Group and Operator}

Let $X$ be a complex Banach space equipped with the supremum norm $\Vert\cdot\Vert$, 
$\R^+ = [0, \infty)$ and let $S(t) \in \mathcal{L}(X)$ 
(the Banach space of all bounded linear operators on $X$) satisfy
\begin{equation}\label{initial}
\left\{\begin{array}{ll}
S(t+s)=S(t)S(s) 
		& \hbox {for all $t, s\in \R^+$},\\
S(0)=I .\\
\end{array}\right.
\end{equation}
Then $S= \{ S(t): t\ge 0 \}$ is an (operator) semigroup. 
A semigroup is \textit{strongly continuous} 
(called a \textit{$(C_0)$ semigroup}) 
if $S(t)f$ is a continuous function of $t \in \R^+$ for all $f \in X$.
If \eqref{initial} and strong continuity hold 
for all $t,s \in \R$, then $S$ is a \textit{$(C_0)$ group}.

For the remainder of the paper,  $\nu>0$ and $0<a<1$ are fixed.
	
We begin by studying the first order Cauchy problem 
\begin{equation}\label{(2.1)}
\left\{\begin{array}{ll}
\frac{\partial u}{\partial t}=\nu x^a \frac{\partial u}{\partial x}
		& \hbox{if $x > 0, \; t \in \R$,}\\
u(x,0)=f(x) &  \hbox{if $x > 0$.}
\end{array}\right. 
	\end{equation} 
If we define the operator $A$ formally by
\begin{equation}\label{operator}
	A \ =\ \nu x^a \frac{\partial}{\partial x},
\end{equation}
then the initial value problem \eqref{(2.1)} can be written in abstract form as
\[
\left\{\begin{array}{ll}
\frac{\partial u}{\partial t} = Au \\
u(0)=f
\end{array}\right.
\] 
where $f$ is a $C^1$ function on $\R^+$.  

Define
\begin{equation}\label{initial2} 
S(t)f(x)\ :=\ f(|x^{1-a}+\nu (1-a)t|^{\frac{1}{1-a}}).
\end{equation} 
The next lemma proves that $S=\{S(t): t \in \R\}$ defined via 
\eqref{initial2} is a group.

\begin{lemma}\label{2-1} 
The unique solution of \eqref{(2.1)} is 
\begin{equation}\label{solution}
u(x,t) = f(|x^{1-a}+\nu (1-a)t|^{\frac{1}{1-a}})=S(t)f(x).
\end{equation}
\end{lemma}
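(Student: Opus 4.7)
My plan is to solve \eqref{(2.1)} by the method of characteristics and then verify the resulting formula directly. The characteristic ODE associated with $u_t=\nu x^a u_x$ is $\dot x(t)=-\nu x(t)^a$, along which $u$ is constant. Separating variables and integrating,
\[
\frac{x(t)^{1-a}-x_0^{1-a}}{1-a}\ =\ -\nu t,
\]
so a characteristic starting at $x_0$ at time $0$ satisfies $x(t)^{1-a}=x_0^{1-a}-\nu(1-a)t$. Inverting for $x_0$ in terms of the current position $x=x(t)$ gives $x_0=\bigl(x^{1-a}+\nu(1-a)t\bigr)^{1/(1-a)}$, and using that $u(x,t)=u(x_0,0)=f(x_0)$ yields the candidate formula \eqref{solution}.

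The conceptually cleanest way to both derive and justify the formula is to linearize the PDE by the change of variable $y=x^{1-a}/(1-a)$, which is a $C^\infty$-diffeomorphism of $(0,\infty)$ onto itself. Since $\partial/\partial x=x^{-a}\partial/\partial y$, the operator $A=\nu x^a\partial/\partial x$ reduces to $\nu\,\partial/\partial y$, and setting $v(y,t):=u(x,t)$ transforms \eqref{(2.1)} into the standard transport problem
\[
v_t\ =\ \nu v_y,\qquad v(y,0)\ =\ g(y)\ :=\ f\bigl((y(1-a))^{1/(1-a)}\bigr),
\]
on $(0,\infty)\times\R$. Its solution is $v(y,t)=g(|y+\nu t|)$, where the absolute value encodes a reflection at $y=0$ needed to keep the argument in the domain of $g$. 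Translating back via $y(1-a)=x^{1-a}$ reproduces \eqref{solution}. Once the candidate is in hand, I would finish by plugging $u(x,t)=f(|x^{1-a}+\nu(1-a)t|^{1/(1-a)})$ directly into the equation, separately on the two open regions $\{x^{1-a}+\nu(1-a)t>0\}$ and $\{x^{1-a}+\nu(1-a)t<0\}$, and computing $u_t$ and $\nu x^a u_x$ by the chain rule; both computations yield $\nu(|x^{1-a}+\nu(1-a)t|)^{a/(1-a)}f'(\cdot)$ with matching signs, so the PDE holds, and $u(x,0)=f(|x^{1-a}|^{1/(1-a)})=f(x)$ is immediate.

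For uniqueness I would argue in the transformed variables: any $C^1$ solution $v$ of $v_t=\nu v_y$ is constant along the lines $y+\nu t=\text{const}$, so on the half-line $y>0$ its values are determined by $g$ together with the reflection dictated by the appearance of $|x^{1-a}+\nu(1-a)t|$ in \eqref{initial2}; equivalently, one verifies that $d/dt\,v(y-\nu t,t)=0$. The main obstacle, and really the only delicate point, is the behavior at the boundary $x=0$: because $0<a<1$, the coefficient $x^a$ is only H\"older and characteristics reach $x=0$ in finite time, so the solution must be continued across the interface $x^{1-a}+\nu(1-a)t=0$. The absolute value in \eqref{initial2} encodes precisely the reflection convention that makes $S(t)$ defined for \emph{all} $t\in\R$, and I would take care to verify differentiability of $u$ across that interface (or, alternatively, read the statement as holding on the open complement of the interface and interpret $u$ as a continuous solution throughout).
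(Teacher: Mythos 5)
Your verification step coincides with the paper's own proof: there, too, one sets $y=x^{1-a}+\nu(1-a)t$ and checks by the chain rule that $\frac{\partial u}{\partial t}=\nu f'(|y|^{\frac{1}{1-a}})\,|y|^{\frac{a}{1-a}}\,{\rm sgn}(y)=\nu x^{a}\frac{\partial u}{\partial x}$ on the two regions $\{y>0\}$ and $\{y<0\}$, exactly as you do. What you add --- the characteristics computation and the linearizing change of variable $y=x^{1-a}/(1-a)$ that turns $A=\nu x^{a}\frac{\partial}{\partial x}$ into $\nu\frac{\partial}{\partial y}$ --- is not in the paper, and it is a genuine improvement in motivation, since it explains where the formula and the reflection hidden in \eqref{initial2} come from rather than producing them out of thin air. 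The one step you only promise to carry out, however, is the one the paper's proof actually spends its effort on: the interface $x^{1-a}+\nu(1-a)t=0$. The paper notes that both $\frac{\partial u}{\partial t}$ and $\nu x^{a}\frac{\partial u}{\partial x}$ equal $\nu f'(|y|^{\frac{1}{1-a}})\,|y|^{\frac{a}{1-a}}\,{\rm sgn}(y)$, and since $f'(0)$ exists and $|y|^{\frac{a}{1-a}}\to 0$ as $y\to 0$ (this is where $0<a<1$ enters), both sides tend to $0$ from either side; hence $y=0$ is a removable singularity and $\frac{\partial u}{\partial t}-\nu x^{a}\frac{\partial u}{\partial x}$ extends continuously by $0$ across the interface. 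You should perform this short limit explicitly instead of offering the weaker reading ``solution only on the open complement of the interface,'' since the removable-singularity observation is essentially the entire content of the published proof. On uniqueness you are, if anything, more candid than the paper: you correctly point out that for $t<0$ in the region $x^{1-a}+\nu(1-a)t<0$ the characteristics exit through $x=0$ before reaching time $0$, so the values of $u$ there are fixed by the reflection convention built into \eqref{initial2} rather than by the initial data alone, whereas the paper disposes of uniqueness in a single assertion. In summary: your proposal reproduces the paper's verification, adds a derivation the paper omits, and needs only the explicit interface limit to be complete.
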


\begin{proof}
Let $x > 0$ and, taking for simplicity $y = x^{1-a} + \nu (1-a)t$, 
from \eqref{solution} and direct computations we have that
\begin{eqnarray}
	\label{partial} 
\frac{\partial u}{\partial t} &=&
\nu f'(|y|^{\frac{1}{1-a}}) |y|^{\frac{a}{1-a}} sgn(y),\\
\nonumber
\frac{\partial u}{\partial x} &=&
f'(|y|^{\frac{1}{1-a}}) |y|^{\frac{a}{1-a}}
sgn(y) x^{-a},
\end{eqnarray}
whence
\begin{equation}
	\label{partial1} 
\nu x^{a} \frac{\partial u}{\partial x} = 
\nu f'(|y|^{\frac{1}{1-a}}) |y|^{\frac{a}{1-a}}
sgn(y).
\end{equation}
Thus, $\frac{\partial u}{\partial t}= \nu x^a \frac{\partial u}{\partial x}$
holds for all $x$, $t$ such that $y \ne 0$. 
Furthermore, by \eqref{partial},
\[
\lim_{y\to 0^+} \frac{\partial u}{\partial t} = 0
\] 
because $f'(0)$ exists and $|y|^{\frac{a}{1-a}} \to 0$
as $y \to 0$ since $0 < a < 1$. Similarly, by \eqref{partial1},
\[
\lim_{y\to 0^+} \left(\nu x^{a} \frac{\partial u}{\partial x}\right) = 0
\] 
and the same limits equal to 0 hold also 
if $y\to 0^-$.
Thus $y=0$ is a removable singularity for
$\frac{\partial u}{\partial t} - \nu x^a \frac{\partial u}{\partial x}$,
so this function can be viewed as a continuous function of $y$
for $y \in \R$, equal to 0 if $y = 0$, 
and so also a continuous function of $(x,t)$ in $\R^* \times \R$.
Uniqueness of the initial value problem gives us that $S$ is a group.
\end{proof}

We now introduce the relevant spaces for our study.  
For any $s\ge 0$, define
\[ 
Y_s\ =\ \left\{f\in C(\R^+): \, 
\lim_{x\to\infty}\frac{|f(x)|}{(1+\e^{s x^{1-a}})(1+\e^{-s x^{1-a}})}\ =\ 0\right\}
\]
equipped with the norm
\[
\Vert f\Vert_{s}\ =\ \sup_{x \in \R^+}
\frac{|f(x)|}{(1+ \e^{s x^{1-a}})(1+\e^{-s x^{1-a}})}\ <\ \infty.
\]

\begin{lemma}\label{2-2}
The operators  $S=\{S(t):t \in \R\}$  defined by \eqref{initial2} 
form a one parameter quasicontractive group on $Y_s$, with
\begin{equation}\label{contractive}
\Vert S (t) f\Vert_{s}\ \le\ 
\e^{\omega |t|}\ \Vert f\Vert_{s} 
\end{equation}
for all $t\in\R$, $f \in Y_s$, 
where $\omega := s\nu (1-a)$.
\end{lemma}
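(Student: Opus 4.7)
The group property was established in Lemma \ref{2-1}, so three items remain: (i) $S(t)$ maps $Y_s$ into itself; (ii) the quasicontractive estimate \eqref{contractive}; and (iii) strong continuity at $t=0$, which by the group law yields continuity on all of $\R$. My strategy is to reduce everything to a single one-variable inequality. Abbreviate the weight by $\phi(u) := (1+\e^u)(1+\e^{-u}) = 2+\e^u+\e^{-u}$, which is an even function of $u$, so that $\Vert f\Vert_s = \sup_{x\ge 0}|f(x)|/\phi(sx^{1-a})$. For $\xi := |x^{1-a}+\nu(1-a)t|^{1/(1-a)}$ one has $s\xi^{1-a} = |sx^{1-a}+\omega t|$, and hence, using evenness of $\phi$,
\[
\frac{|S(t)f(x)|}{\phi(sx^{1-a})}\ =\ \frac{|f(\xi)|}{\phi(s\xi^{1-a})}\,\cdot\,\frac{\phi(sx^{1-a}+\omega t)}{\phi(sx^{1-a})}.
\]

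Everything now reduces to the pointwise estimate $\phi(u+v)\le \e^{|v|}\phi(u)$ for $u\ge 0$, $v\in\R$. I would prove it by expanding
\[
\phi(u+v) = 2+\e^{u+v}+\e^{-u-v}, \qquad \e^{|v|}\phi(u) = 2\e^{|v|}+\e^{u+|v|}+\e^{-u+|v|},
\]
and comparing term by term using $\e^{|v|}\ge 1$, $|v|\ge v$, and $|v|\ge -v$; no case-splitting is necessary. Given this estimate, the bound \eqref{contractive} is immediate upon taking $v=\omega t$ and passing to the supremum in $x$, and the invariance $S(t)Y_s\subseteq Y_s$ follows as well: as $x\to\infty$ also $\xi\to\infty$, so the first factor in the displayed identity tends to $0$ by the tail condition defining $Y_s$, while the second is bounded by $\e^{\omega|t|}$.

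It remains to prove strong continuity at $t=0$. Given $f\in Y_s$ and $\eps>0$, I would first choose $M$ so large that $|f(\xi)|/\phi(s\xi^{1-a})<\eps$ for $\xi>M$. On the compact set $[0,M]$ the map $x\mapsto\xi(x,t)$ converges uniformly to the identity as $t\to 0$; combined with uniform continuity of $f$ on a bounded neighborhood of $[0,M]$ and the uniform lower bound $\phi\ge 4$, this forces the corresponding portion of $\Vert S(t)f-f\Vert_s$ to zero. For $x>M$ (and $|t|$ small enough that also $\xi(x,t)>M$), one has $|f(x)|/\phi(sx^{1-a})<\eps$ directly, while the first display together with the key weight inequality gives $|f(\xi(x,t))|/\phi(sx^{1-a})\le \e^{\omega|t|}\eps$, so this portion is also controlled.

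The main obstacle is really just the weight inequality $\phi(u+v)\le\e^{|v|}\phi(u)$: it is elementary but drives all three parts of the argument — the quasicontractive estimate, the invariance of $Y_s$, and the continuity bound at $t=0$ — so identifying and isolating it is the conceptual step, after which the rest is bookkeeping.
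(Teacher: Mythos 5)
Your argument is correct, and it takes a genuinely different (and cleaner) route than the paper. The paper proves \eqref{contractive} by substituting $y=|x^{1-a}+\nu(1-a)t|^{1/(1-a)}$ and splitting into three cases ($t\ge 0$; $t<0$ with $x^{1-a}+\nu(1-a)t\ge 0$; $t<0$ with $x^{1-a}+\nu(1-a)t\le 0$), manipulating the two weight factors separately in each case. You instead exploit the evenness of the weight $\phi(u)=(1+\e^u)(1+\e^{-u})$, so that $\phi(s\xi^{1-a})=\phi(sx^{1-a}+\omega t)$, and reduce everything to the single elementary inequality $\phi(u+v)\le\e^{|v|}\phi(u)$, proved term by term (in fact it holds for all $u\in\R$, not just $u\ge0$). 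This eliminates the case analysis entirely, and it also gives you the invariance $S(t)Y_s\subseteq Y_s$ explicitly, a point the paper leaves implicit. The paper's case-by-case computation buys nothing extra here; your factorization is the sharper way to see why $\omega=s\nu(1-a)$ is the right constant.

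Two small remarks. First, strong continuity is not actually part of this lemma in the paper (it is Lemma \ref{2-3}, proved there by density of $C^1_c(\R^+)$), so that portion of your write-up is a bonus; your direct $\eps$--$M$ argument replaces the density argument and is fine in spirit. Second, in that bonus part there is a quantifier slip: for a fixed small $t\ne 0$ it is not true that $x>M$ forces $\xi(x,t)>M$ (take $x$ slightly above $M$ and $t<0$). This is easily repaired by splitting at $M+1$ instead: choose the tail bound for $\xi\ge M$, run the compact argument on $[0,M+1]$, and note that for $x\ge M+1$ and $\nu(1-a)|t|\le (M+1)^{1-a}-M^{1-a}$ one does get $\xi\ge M$. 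With that adjustment the argument is complete.
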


\begin{proof}
First we note that  infinitesimal generator of $S$ is 
formally $A = \nu x^a \frac{d}{d x}$ since
\[
\begin{split}
Af(x) &=\frac{d}{dt}S(t)f(x)|_{t=0}\\
&=\nu x^a f'(x),
\end{split}
\]
Clearly $f \in \D (A)$ requires that both $f$ and $x^af'$ 
are in $Y_s$; we will characterize the domain later. That $S$ is group follows 
from Lemma \ref{2-1}.

To prove the quasicontractivity, we look at
\[
\Vert S(t)f \Vert_s
=\sup_{x>0}\frac{|(S(t)f)(x)|}{(1+ \e^{s x^{1-a}})\ (1+ \e^{-s x^{1-a}})}
=\sup_{x>0}\ \frac{|f(|x^{1-a}+ \nu (1-a)t|^{\frac{1}{1-a}})|}{(1+ \e^{s x^{1-a}})\ (1+\e^{-s x^{1-a}})}.
\]
Set
\[
y\ =\ |x^{1-a} + \nu (1-a)t|^{\frac{1}{1-a}}.
\] 
We have three cases to consider.

{\it Case 1.}  
Let $t \ge 0$.  
Then
\begin{equation}\label{contra2}
y\ =\ (x^{1-a} + \nu (1-a)t)^{\frac{1}{1-a}},
\end{equation}
and so
\[
\e^{sx^{1-a}} = \e^{sy^{(1-a)}} e^{-s \nu (1-a)t}
\] 
and $t\nu >0$.
Then
\[
\begin{split}
	\Vert S(t)f \Vert_s\ &=\ \sup_{y>0} \frac{|f(y)|}{(1+ \e^{s y^{1-a}}\ 
	\e^{-\omega t})\ (1+\e^{-s y^{1-a}}\ \e^{\omega t})}\\
&\le\ \sup_{y>0} \frac{|f(y)|}{ \e^{-\omega t} (\e^{\omega t} + \e^{s y^{1-a}})\ (1+\e^{-s y^{1-a}})}\
\le\ \sup_{y>0} \e^{\omega t} \ \frac{|f(y)|}{(1+ \e^{s y^{1-a}})\ 
	(1+\e^{-s y^{1-a}})}\\
	 &\le\  \e^{\omega t}\ \Vert f\Vert_s
\end{split}
\]
 by our choice of $\omega$, and so the estimate \eqref{contractive} holds.
 
{\it Case 2.}  Let $t < 0$ but $x^{1-a} + \nu (1-a)t \ge 0$.
In this case, $y$ is still given by \eqref{contra2} but $t\nu <0$.
Then we have 
\[
\begin{split}
	\Vert S(t)f \Vert_s\ 
	&\le\  \sup_{y>0} \frac{|f(y)|}{ \e^{\omega t} (1 + \e^{s y^{1-a}})\ 
		(\e^{-\omega t} +\e^{-s y^{1-a}})}\
	\le\  \sup_{y>0} \e^{-\omega t}\ \frac{|f(y)|}{(1+ \e^{s y^{1-a}})\ 
		(1+\e^{-s y^{1-a}})}\\
		&\le\  \e^{\omega |t|}\ \Vert f\Vert_s.
\end{split}
\] 
and so \eqref{contractive} holds.

{\it Case 3.} Let $t < 0$ and $x^{1-a} + \nu (1-a)t \le 0$.
In this case $t\nu <0$ and $y$ is given by 
\[
	y^{1-a} = - x^{1-a} - \nu (1-a)t,
\]
and so
\[
	x^{1-a} = - y^{1-a} -\nu(1-a)t.
\]
Then
\[
\begin{split}
	\Vert S(t)f\Vert_{s} 
	&\le\  \sup_{y>0} \frac{|f(y)|}
	{ \e^{\omega t} (1 + \e^{- s y^{1-a}})\ 
		(\e^{-\omega t} +\e^{s y^{1-a}})}\
	\le\  \sup_{y>0} \e^{-\omega t}\ \frac{|f(y)|}{(1+ \e^{- s y^{1-a}})\ (1+\e^{s y^{1-a}})} \\
	&\le\  \e^{\omega |t|}\ \Vert f\Vert_s.
\end{split}
\]
which is again \eqref{contractive}.
\end{proof}

\begin{lemma} \label{2-3}
$S=\{S(t):t\in\R\}$ is a $(C_0)$ group on $Y_s$,
that is for any $f\in Y_s$ ,
\begin{equation}\label{limit0}
\Vert S(t) f - f\Vert_s \ \to\ 0 
\quad \hbox{as $t \to 0$.}
\end{equation}
\end{lemma}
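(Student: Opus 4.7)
The plan is to use the standard density plus uniform bound argument: verify \eqref{limit0} directly on a convenient dense subspace of $Y_s$, and then extend to all of $Y_s$ by means of the quasicontractive estimate $\|S(t)\|_{\LL(Y_s)} \le e^{\omega|t|}$ supplied by Lemma~\ref{2-2}.

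As the dense subspace I would take $\D = C_{c}(\R^+)$, the continuous functions of compact support. To check density, set $w(x) := (1 + \e^{s x^{1-a}})(1 + \e^{-s x^{1-a}})$; the definition of $Y_s$ forces $|f(x)|/w(x) \to 0$ as $x \to \infty$. So given $f \in Y_s$ and $\eps > 0$ one picks $N$ large enough that $|f(x)|/w(x) < \eps$ for $x \ge N$, multiplies $f$ by a continuous cutoff that equals $1$ on $[0,N]$ and $0$ on $[N+1,\infty)$, and obtains $g \in C_{c}(\R^+)$ with $\|f-g\|_s \le \eps$.

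For strong continuity on $\D$, fix $g \in C_{c}(\R^+)$ with $\supp g \subset [0,M]$ and write $\phi_t(x) := |x^{1-a} + \nu(1-a)t|^{1/(1-a)}$, so that $(S(t)g)(x) = g(\phi_t(x))$. The map $(x,t) \mapsto \phi_t(x)$ is jointly continuous on $\R^+ \times \R$ (the absolute value tames the sign change at $y=0$, as already exploited in Lemma~\ref{2-1}), so on the compact box $[0,M+1] \times [-1,1]$ it is uniformly continuous; hence $\phi_t \to \mathrm{id}$ uniformly on $[0,M+1]$ as $t \to 0$. Combined with uniform continuity of $g$ this gives $\sup_{x \in [0,M+1]} |g(\phi_t(x))-g(x)| \to 0$. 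For $x > M+1$ an elementary estimate on $u \mapsto u^{1/(1-a)}$ shows that for $|t|$ below a threshold depending only on $M$ one has $\phi_t(x) > M$, so $g(\phi_t(x)) = g(x) = 0$. Since $w$ is bounded below by a positive constant, these two facts combine to yield $\|S(t)g - g\|_s \to 0$.

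To conclude, given $f \in Y_s$ and $\eps > 0$, pick $g \in \D$ with $\|f-g\|_s < \eps$ and apply Lemma~\ref{2-2}:
\[
\|S(t)f - f\|_s \ \le\ e^{\omega|t|}\|f-g\|_s \ +\ \|S(t)g - g\|_s \ +\ \|g-f\|_s.
\]
Letting $t \to 0$ the middle term vanishes by the previous paragraph while the outer ones are $\le (e^{\omega|t|}+1)\eps$, giving $\limsup_{t\to 0}\|S(t)f-f\|_s \le 2\eps$; arbitrariness of $\eps$ closes the argument. The one subtle point I expect to have to pin down is the estimate $\phi_t(x) > M$ for all $x > M+1$ and all small $|t|$ of \emph{either} sign: one must rule out Case~3 from the proof of Lemma~\ref{2-2} (where $x^{1-a} + \nu(1-a)t \le 0$), which simply requires choosing the $t$-window small relative to $M$, but is the step that needs explicit verification.
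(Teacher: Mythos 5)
Your proof is correct and follows essentially the same route as the paper: approximate by compactly supported functions, use the quasicontractive bound of Lemma~\ref{2-2} to transfer strong continuity from the dense class, and get uniform convergence of $g\circ\phi_t$ to $g$ from uniform continuity of $(x,t)\mapsto|x^{1-a}+\nu(1-a)t|^{\frac{1}{1-a}}$ on a compact box. The only differences are minor: the paper works with $C^1_c(\R^+)$ and a Lipschitz bound via $\Vert f'\Vert_\infty$ where you use plain $C_c(\R^+)$ and the modulus of continuity, and you treat the tail region $x>M+1$ (relevant when $t<0$) explicitly, a point the paper's restriction of the supremum to $[0,M]$ passes over.
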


\begin{proof} 
Because $C^{1}_c(\R^+)$, the space of continuously differentiable 
functions with compact support on $\R^+$, is dense in $Y_s$, 
it is sufficient to prove the statement on $C^{1}_c(\R^+)$.
Note that functions in $C^{1}_c(\R^+)$ need not vanish
at the origin. 
Thus, taking $f\in C^{1}_c(\R^+)$, we have that 
$\supp(f)\subseteq [0,M]$ for some $M< \infty$.
We see that 
\[
\begin{split}
\Vert S(t) f - f \Vert_{s}\ 
&=\sup_{x\in [0,M]}\ \frac{|f(|x^{1-a}+ \nu (1-a)t|^{\frac{1}{1-a}})
-f(x)|}{(1+ \e^{s x^{1-a}})\ (1+\e^{-s x^{1-a}})}\\
&\le \Vert f' \Vert_{\infty} \sup_{x\in [0,M]}\ 
\frac{||x^{1-a}+ \nu (1-a)t|^{\frac{1}{1-a}}-x|}{(1+ \e^{s x^{1-a}})\ (1+\e^{-s x^{1-a}})}\\
& \le C\Vert f' \Vert_{\infty} \sup_{x\in [0,M]}\ \frac{||x^{1-a}
	+ \nu (1-a)t|^{{\frac {1}{1-a}}}-x|}{1+\e^{sx^{1-a}}}.
\end{split}
\]
But the function $|x^{1-a}+ \nu (1-a)t|^{{\frac {1}{1-a}}}$ 
is uniformly continuous on $[0,M] \times [0,1]$, 
so as $t$ approaches $0$,
$||x^{1-a}+\nu (1-a)t|^{\frac {1}{1-a}}-x| \rightarrow 0$ uniformly 
in $t$ for $x\in [0,M]$. Hence, \eqref{limit0} holds.
\end{proof}

In the remainder of this section we present 
some properties of the operator $A= x^a \frac{d}{dx}$.

Notice that the operator $A$ acts on 
$Y_s \cap C_0(0,\infty)$ if we impose the homogeneous Dirichlet 
boundary condition $f(0)=0$, as it is 
\[
C_0(0,\infty) = \{f \in C(\R^+):\ f(0)=0, \,
\lim_{x\to \infty} f(x) = 0\}. 
\]
Also note that while $Y_s$ contains functions that can be unbounded at $\infty$,
functions in $C_0(0,\infty)$ vanish at both 0 and $\infty$.
Furthermore, define
\[
\tilde Y_s = Y_s \cap C_0(\R^+) = \C \oplus (Y_s \cap C_0(0,\infty)).
\] 
Here, the $\oplus$ means direct sum and 
\[
C_0(\R^+) = \{f \in C(\R^+):\ 
\lim_{x\to \infty} f(x) = 0\}.
\]
By definition, $\tilde Y_s$ is a one-dimensional extension 
of $Y_s \cap C_0(0,\infty)$.

We extend $A= x^a \frac{d}{dx}$ to $A_e$ on 
$\tilde Y_s$. Such an extension is given by
\[
A_e(c+f) = 0 + Af = Af\quad 
\hbox{for $f \in Y_s$, $c \in \C$}
\]
(clearly, $x^a \frac{d}{dx}(c) = 0$).
Then $A_e$ generates a $(C_0)$ group on $Y_s$.
No boundary condition is needed at $x=0$. 

Henceforth, we simply write $A$ to mean $A_e$.

We define 
\begin{equation}\label{domain}
\D(A) = \{f\in Y_s\cap C_0^1(0,\infty):\, Af \in Y_s\}.
\end{equation}

\begin{theorem}\label{Thm2-1}
The operator $A$ with domain $\D (A)$ is the generator of the $(C_0)$ quasicontractive group 
$S=\{S(t):t \in \R\}$ on $Y_s$, and hence
\[
R(\pm \lambda I - A) = Y_s
\]
for $\lambda >0$ sufficiently large.
\end{theorem}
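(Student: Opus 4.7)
My plan is to identify the generator $B$ of the $(C_0)$ quasicontractive group $S=\{S(t):t\in\R\}$ provided by Lemmas \ref{2-1}--\ref{2-3} with the operator $A$ on the domain \eqref{domain}, and then to read off the range assertion from the Hille--Yosida theorem. Because $S$ is a group with $\|S(\pm t)\| \le \e^{\omega t}$ for $t\ge 0$, both forward and backward generation theorems apply, and it suffices to establish $R(\lambda I - A) = Y_s$ for $\lambda > \omega$; the case $-\lambda I - A$ then follows by running the same argument with the backward semigroup $\{S(-t):t\ge 0\}$, whose generator is $-A$.

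The first step is to prove $A \subseteq B$. Fix $f \in \D(A)$. For $t\ne 0$ sufficiently small and $x$ in a compact subset of $(0,\infty)$, set $y(x,t) := (x^{1-a}+\nu(1-a)t)^{1/(1-a)}$; a first-order Taylor expansion yields $y(x,t) - x = \nu x^a t + o(t)$ uniformly on compacts. Combined with the identity $S(t)f(x)-f(x) = \int_x^{y(x,t)} f'(\xi)\,d\xi$ and the continuity of $f'$, this gives the pointwise limit $t^{-1}(S(t)f(x)-f(x)) \to \nu x^a f'(x) = Af(x)$. To upgrade this to convergence in $\|\cdot\|_s$, I would split the supremum over $x$ into a compact region, where uniform continuity of $f'$ suffices, and the tails $x\to 0^+$ and $x\to\infty$; near $0$ the prefactor $x^a$ does the work, while near $\infty$ the hypothesis $Af\in Y_s$ together with the quasicontractive estimate \eqref{contractive} applied to both $f$ and $Af$ provides the control. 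The sign change for $t<0$ when $x^{1-a}+\nu(1-a)t$ passes through zero is harmless, thanks to the removable-singularity argument used in Lemma \ref{2-1}.

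For the reverse inclusion I would use the explicit resolvent of $B$. Since $\lambda \in \rho(B)$ for $\lambda>\omega$, Hille--Yosida gives
\[
R(\lambda,B)g(x) = \int_0^\infty \e^{-\lambda t} g\bigl((x^{1-a}+\nu(1-a)t)^{1/(1-a)}\bigr)\,dt,
\]
and the substitution $\eta = x^{1-a}+\nu(1-a)t$ turns this into
\[
R(\lambda,B)g(x) = \frac{\e^{\lambda x^{1-a}/(\nu(1-a))}}{\nu(1-a)} \int_{x^{1-a}}^\infty \e^{-\lambda\eta/(\nu(1-a))} g(\eta^{1/(1-a)})\,d\eta.
\]
Differentiation under the integral sign shows that $u := R(\lambda,B)g$ is $C^1$ on $(0,\infty)$ and satisfies $\nu x^a u'(x) = \lambda u(x) - g(x)$ pointwise. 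Since $u\in Y_s$ automatically (as $R(\lambda,B)$ maps into $\D(B)\subseteq Y_s$) and $Au = \lambda u - g \in Y_s$, we have $u\in\D(A)$ and $(\lambda I - A)u = g$. This yields $R(\lambda I - A) = Y_s$, and combined with $A\subseteq B$ and the injectivity of $\lambda I - B$, it forces $A = B$.

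The main obstacle is the upgrade from pointwise to $\|\cdot\|_s$-norm convergence of the difference quotient $t^{-1}(S(t)f-f)$: because functions in $Y_s$ can grow like $\e^{s x^{1-a}}$ at infinity, the Taylor remainder $(y(x,t)-x)/t - \nu x^a$ must be weighed carefully against the ambient weight, and the precise identification $\omega = s\nu(1-a)$ from Lemma \ref{2-2} is exactly what allows the exponential factors in \eqref{contractive} to absorb this weight uniformly in $t$ near $0$.
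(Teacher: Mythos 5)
Your proposal is correct in substance, but it takes a genuinely different (and far more detailed) route than the paper: the paper's entire proof of Theorem \ref{Thm2-1} is a one-line appeal to Lemmas \ref{2-1}, \ref{2-2}, \ref{2-3} together with the Hille--Yosida theorem, and it never explicitly verifies that the generator $B$ of $S$ coincides with $A$ on the domain \eqref{domain}. You supply exactly that identification: first $A\subseteq B$ via convergence of the difference quotients, then equality via the Laplace-transform resolvent $R(\lambda,B)g(x)=\int_0^\infty \e^{-\lambda t}g\bigl((x^{1-a}+\nu(1-a)t)^{1/(1-a)}\bigr)\,dt$, which after the substitution $\eta=x^{1-a}+\nu(1-a)t$ (convergent for $\lambda>\omega=s\nu(1-a)$, matching the growth $\e^{s\eta}$ permitted in $Y_s$) is a $C^1$ solution of $\nu x^a u'=\lambda u-g$, hence lies in $\D(A)$ with $(\lambda I-A)u=g$; together with $A\subseteq B$ and injectivity of $\lambda I-B$ this gives $A=B$, and the backward semigroup with generator $-A$ handles $R(-\lambda I-A)=Y_s$. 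What your route buys is a self-contained proof of the statement the paper only asserts; what the paper's route buys is brevity, at the cost of leaving the domain identification implicit. Two small refinements: the tail-splitting you flag as the main obstacle can be bypassed entirely, since your identity $S(t)f(x)-f(x)=\int_x^{y(x,t)}f'(\xi)\,d\xi$ becomes, after the change of variables $\xi=y(x,r)$ with $d\xi=\nu\,y(x,r)^a\,dr$, the formula $S(t)f-f=\int_0^t S(r)Af\,dr$, whence $\Vert t^{-1}(S(t)f-f)-Af\Vert_s\le\sup_{|r|\le|t|}\Vert S(r)Af-Af\Vert_s\to 0$ by Lemma \ref{2-3} applied to $Af\in Y_s$, with no weight bookkeeping at all; and your final step tacitly reads $C_0^1(0,\infty)$ in \eqref{domain} as plain $C^1(0,\infty)$ without vanishing conditions, which is the reading under which $R(\lambda,B)g$ actually lands in $\D(A)$ and the theorem holds as stated --- worth saying explicitly, since the paper's notation there is ambiguous.
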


\begin{proof}
The statement follows from Lemmas \ref{2-1}, \ref{2-2}, \ref{2-3}
and the Hille-Yosida Theorem.
\end{proof}

We recall the next theorem due to J. A. Goldstein \cite{G1}, 
which is essential to the analysis in the remainder 
of the paper followed by a 
theorem due to Romanov (see \cite[Chapter 2, Section 8]{G2}) 
which also leads to a useful formula in the present setting.

\begin{theorem}\label{Thm2-2}
Suppose that $A$ generates a $(C_0)$ group on a Banach space $X$ 
and let 
\[
p(z)\ =\ (-1)^{n+1} z^{2n} + q(z)
\]
be a polynomial of degree $2n$ where $q(z)$ 
is a polynomial of degree less than $2n$. 
Then $p(A)$ generates an analytic semigroup of 
angle $\frac{\pi}{2}$ on $X$.
\end{theorem}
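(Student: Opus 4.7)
The plan is to construct $e^{tp(A)}$ explicitly as an operator-valued Fourier multiplier attached to the group $S$, and then read off analyticity of angle $\pi/2$ from the resulting formula. By Hille--Yosida for groups, $\|S(t)\|\le Me^{\omega|t|}$ for some $\omega\ge 0$, so $\sigma(A)$ lies in the vertical strip $\{z\in\C:|\mathrm{Re}(z)|\le\omega\}$. Evaluating on the imaginary axis gives
\[
p(i\xi)=(-1)^{n+1}(i\xi)^{2n}+q(i\xi)=-\xi^{2n}+O(\xi^{2n-1}),
\]
whose real part tends to $-\infty$ like $-\xi^{2n}$; combined with the bounded width of the strip containing $\sigma(A)$, the image $p(\sigma(A))$ sits inside a parabolic region asymptotic to $(-\infty,0]$, which is the spectral picture expected of a generator of an analytic semigroup of angle $\pi/2$.

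For the construction, for $\mathrm{Re}(t)>0$ I would set
\[
T(t)f := \int_{-\infty}^{\infty} k_t(s)\, S(s)f \, ds,
\]
where $k_t$ is (up to a sign convention) the inverse Fourier transform in $\xi$ of $\psi_t(\xi):=e^{tp(i\xi)}$, formally realising the multiplier recipe $T(t)=\psi_t(A)$. Because $\mathrm{Re}(p(i\xi))=-\xi^{2n}+O(\xi^{2n-1})$, $\psi_t$ is Schwartz in real $\xi$ and entire in complex $\xi$. Shifting the Fourier contour to $\mathrm{Im}(\xi)=\pm R$---justified because the leading term $-t\xi^{2n}$ still dominates $\mathrm{Re}(tp(i\xi))$ on each horizontal line as $|\mathrm{Re}(\xi)|\to\infty$---yields $|k_t(s)|\le C_R(t)\,e^{-R|s|}$ for every $R>0$. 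Choosing $R>\omega$ makes $\int|k_t(s)|e^{\omega|s|}\,ds<\infty$, so $T(t)\in\LL(X)$, and holomorphy of $t\mapsto T(t)$ on the right half-plane follows by differentiating under the integral.

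The semigroup identity $T(t)T(t')=T(t+t')$ reduces, via Fubini and the group property $S(s)S(s')=S(s+s')$, to the convolution law $k_t*k_{t'}=k_{t+t'}$, which on the Fourier side is simply $\psi_t\psi_{t'}=\psi_{t+t'}$. Differentiating $T(t)f$ at $t=0^+$ on the core $\D(A^{2n})$ recovers $p(A)f$, so the generator of $T$ is the closure of $p(A)$ on that core; combined with the holomorphic extension of $T$ to the open right half-plane and uniform boundedness on each proper subsector, this is the analytic semigroup of angle $\pi/2$. The principal obstacle I anticipate is the kernel estimate on the shifted contour: for $n\ge 2$, where $p(i\xi)$ has genuine higher-order growth off the real axis, one must check that the real part of $-t\xi^{2n}$ on the line $\mathrm{Im}(\xi)=\pm R$ still dominates all other contributions uniformly enough in $s$ to produce the claimed exponential bound. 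This is where the structural hypothesis that the leading coefficient is $(-1)^{n+1}$---so that after the $i^{2n}$ factor one recovers $-\xi^{2n}$ with the correct sign---is essential.
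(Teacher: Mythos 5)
First, note that the paper does not prove this theorem at all: it is recalled verbatim from Goldstein's 1969 paper \cite{G1}, so you are being compared with the classical argument rather than with an in-paper proof. Your transference construction $T(t)f=\int_{\R}k_t(s)S(s)f\,ds$, with $k_t$ the inverse Fourier transform of $\e^{tp(i\xi)}$, is exactly the right circle of ideas (for $n=1$ it reproduces Romanov's formula \eqref{(3)} used later in the paper), and your observations about the sign of the leading coefficient, the semigroup law via $k_t*k_{t'}=k_{t+t'}$, and Fubini are sound. But there is a genuine gap at the point you yourself flag as the main obstacle, and it is not the one you identify. The fixed-height contour shift gives $|k_t(s)|\le C_R(t)\e^{-R|s|}$ where $C_R(t)=\frac{1}{2\pi}\int_{\R}\e^{\mathrm{Re}(tp(i\sigma\mp R))}\,d\sigma\sim c(R)\,(\mathrm{Re}\,t)^{-1/(2n)}$ as $t\to 0$ (already for $p(z)=z^2$ one gets $C_R(t)\approx \e^{tR^2}\sqrt{\pi/t}$). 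This suffices for $T(t)\in\mathcal{L}(X)$ and for holomorphy at each fixed $t$ in the open right half-plane, but it cannot deliver $\sup\{\Vert T(t)\Vert:\ 0<|t|\le 1,\ |\arg t|\le\theta\}<\infty$ nor the strong continuity $T(t)f\to f$ as $t\to 0^+$ --- and both are part of what ``analytic semigroup of angle $\frac{\pi}{2}$'' means; moreover your differentiation of $T(t)f$ at $t=0^+$ on $\D(A^{2n})$ silently presupposes that continuity. To close the gap you must let the contour height depend on $s$ and $t$ (a saddle-point choice $R\sim(|s|/\mathrm{Re}\,t)^{1/(2n-1)}$), which yields bounds of the type $|k_t(s)|\le C(\mathrm{Re}\,t)^{-1/(2n)}\exp\bigl(-c\,(|s|^{2n}/\mathrm{Re}\,t)^{1/(2n-1)}\bigr)$, hence uniform $\int|k_t(s)|\e^{\omega|s|}ds$ bounds on bounded subsectors together with concentration of $k_t$ at $s=0$ and $\int k_t\,ds\to 1$; only then do boundedness near $t=0$ and $T(t)f\to f$ follow.

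A second, smaller gap is the identification of the generator: asserting it is ``the closure of $p(A)$ on the core $\D(A^{2n})$'' is weaker than the statement; one should check that $\D(A^{2n})$ is $T(t)$-invariant (hence a core) and that $p(A)$ with domain $\D(A^{2n})$ is already closed, which follows from inequalities of Landau--Kolmogorov type showing $\Vert A^k f\Vert\le\eps\Vert A^{2n}f\Vert+C_\eps\Vert f\Vert$ for $k<2n$ when $A$ generates a group. These same inequalities point to the classical route of \cite{G1}, which is cleaner than carrying the full polynomial in the kernel: prove generation for the leading term $(-1)^{n+1}A^{2n}$ alone (where $\psi_t(\xi)=\e^{-t\xi^{2n}}$ and the sub-Gaussian kernel estimates above are standard), and then absorb $q(A)$ as an $A^{2n}$-bounded perturbation with relative bound zero, using the stability of analytic-semigroup generation under such perturbations. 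That decomposition buys you uniform kernel estimates once and for all and avoids re-deriving them for every lower-order coefficient, at the cost of invoking a perturbation theorem; your all-in-one multiplier approach is viable but only after the $t$-uniform kernel bounds are actually established.
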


\begin{theorem}\label{Thm2-3}
Let $A$ be the generator of a $(C_0)$ group $S=\{S(t): t\in \R\}$ 
on a Banach space $X$. Then $A^2$ generates a $(C_0)$ semigroup 
$T=\{T(t): t \ge 0\}$, analytic in the right half plane, given by
\begin{equation}\label{(3)}
	T(t)f\ =\ \dfrac1{\sqrt{4\pi\ t}}\ 
	\int_0^{\infty}
\e^{-\frac{y^2}{4t}}\ [S(y) + S(-y)]f\ dy
\end{equation}
for any $t>0$, $f\in X$.
\end{theorem}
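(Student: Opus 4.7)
My plan is to verify directly that the operator-valued function defined by the right-hand side of \eqref{(3)} is a strongly continuous semigroup on $X$ whose generator extends $A^{2}$. Existence of a $(C_0)$ semigroup generated by $A^{2}$, analytic of angle $\pi/2$, is already given by Theorem~\ref{Thm2-2} applied with $n = 1$ and $q \equiv 0$, so uniqueness of the generator of a $(C_0)$ semigroup will force the semigroup in question to coincide with the one prescribed by \eqref{(3)}.

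First I would check that the Bochner integral in \eqref{(3)} is well defined. Since $S$ is a $(C_0)$ group there exist $M \ge 1$ and $\omega \ge 0$ with $\|S(\pm y)\| \le M\,\e^{\omega y}$ for $y \ge 0$, so the integrand is dominated in norm by $2M\,\e^{-y^{2}/(4t)+\omega y}\|f\|$, which is integrable on $[0,\infty)$ for every $t > 0$. It is convenient to rewrite $T(t)f = \int_{\R} K(t,y)\, S(y) f\, dy$ with $K(t,y) = (4\pi t)^{-1/2}\e^{-y^{2}/(4t)}$, since $K(t,\cdot)$ is even and the factor $[S(y) + S(-y)]$ is precisely the symmetrisation on $\R$. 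The semigroup identity $T(t+s) = T(t)T(s)$ then reduces, via Fubini for Bochner integrals and the group law $S(y)S(z) = S(y+z)$, to the scalar convolution
\[
\int_{-\infty}^{\infty} K(t, u-v)\, K(s, v)\, dv \;=\; K(t+s, u),
\]
which is the classical stability of Gaussians under convolution. Strong continuity at $t = 0^{+}$ follows from the change of variable $y = 2\sqrt{t}\,\eta$, after which the integrand converges pointwise to $\pi^{-1/2}\e^{-\eta^{2}}\cdot 2f$ by strong continuity of $S$ at $0$ and is dominated by $2M\,\pi^{-1/2}\e^{-\eta^{2} + 2\omega\sqrt{t}\,\eta}\|f\|$, so dominated convergence yields $T(t)f \to f$.

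To identify the generator I would fix $f \in \D(A^{2})$ and differentiate $T(t)f$ under the integral, using the heat identity $\partial_{t}K = \partial_{y}^{2}K$ to transfer the time derivative onto the spatial variable. Two integrations by parts in $y$ shift two derivatives onto $[S(y)+S(-y)]f$, and the identity $\partial_{y}^{2}[S(y)+S(-y)]f = [S(y)+S(-y)]A^{2}f$ then produces $\partial_{t}T(t)f = T(t)A^{2}f$. Boundary terms at $y = 0$ vanish because $\partial_{y}K(t,0) = 0$ and because $\partial_{y}[S(y)+S(-y)]f|_{y=0} = Af - Af = 0$, while the terms at $y = \infty$ vanish by Gaussian decay combined with the exponential bound on $S(\pm y)$. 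This proves that the generator $\tilde{A}$ of $T$ extends $A^{2}$; since $A^{2}$ itself already generates a $(C_0)$ semigroup by Theorem~\ref{Thm2-2}, and two $(C_0)$ semigroups with the same generator must coincide, equality $\tilde{A} = A^{2}$ and formula \eqref{(3)} follow.

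The step I expect to require the most care is the rigorous justification of the Bochner Fubini exchange and of the Gaussian convolution identity used in the semigroup property. The pointwise and norm bounds above make the exchange legal, but the bookkeeping (splitting $[S(y)+S(-y)]$ into two halves, passing to the symmetric form on $\R$, and identifying the resulting convolution as $K(t+s,\cdot)$) is where essentially all of the calculation sits. Once this is settled, strong continuity and the identification of the generator reduce to a short dominated-convergence argument and a two-step integration by parts, respectively.
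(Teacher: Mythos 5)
Your proposal is correct, but it cannot be compared with an internal argument because the paper offers none: Theorem~\ref{Thm2-3} is quoted as Romanov's theorem and referred to \cite[Chapter 2, Section 8]{G2}, and only its consequence (Theorem~\ref{Thm2-4}) is ``proved'' by combining it with Theorem~\ref{Thm2-2}. What you supply is the standard direct verification of Romanov's formula: symmetrize the kernel to write $T(t)f=\int_{\R}K(t,y)S(y)f\,dy$ with the Gaussian $K$, get the semigroup law from the Chapman--Kolmogorov identity for $K$ plus Fubini and the group law, get strong continuity at $0^{+}$ by rescaling and dominated convergence, and identify the generator by the heat identity $\partial_t K=\partial_y^2 K$ and two integrations by parts, the boundary terms dying because $\partial_yK(t,0)=0$ and $\frac{d}{dy}[S(y)+S(-y)]f\big|_{y=0}=Af-Af=0$. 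This is essentially the classical derivation (in the literature it is often phrased via the cosine function $C(y)=\frac12[S(y)+S(-y)]$ with generator $A^2$ and the abstract Weierstrass formula), so your route is a legitimate self-contained replacement for the citation; like the paper, you import the analyticity of angle $\pi/2$ from Theorem~\ref{Thm2-2} with $n=1$, $q\equiv 0$. Two routine steps deserve explicit mention in a final write-up: from $\frac{d}{dt}T(t)f=T(t)A^2f$ on $(0,\infty)$ you should integrate, $T(t)f-f=\int_0^t T(r)A^2f\,dr$, and divide by $t$ to conclude $f\in\D(\tilde A)$ with $\tilde A f=A^2f$; and the final identification uses the standard fact that if one generator of a $(C_0)$ semigroup extends another, the two coincide (via bijectivity of $\lambda-\tilde A$ and $\lambda-A^2$ for large $\lambda$), after which uniqueness of the semigroup generated by $A^2$ yields \eqref{(3)}. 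Neither point is a gap, only bookkeeping to be written out.
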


Equation \eqref{(3)} is called \textit{Romanov's formula}.

\begin{theorem}\label{Thm2-4}
Let $A$ be as in \eqref{operator}
with domain $\D(A)$ as in \eqref{domain}. 
Then the operator
\[
A^2 f\ =\ \nu^2 (x^{2a} f''+ a x^{2a-1} f')
\]
with domain
\[
D(A^2)\ =\
\{f\in D(A):\, Af\in D(A),\ 
A^2 f\in Y_s\}
\]
is the generator of the $(C_0)$ semigroup 
$T=\{T(t):t\ge 0\}$, analytic in the right half plane and
given by 
\begin{equation}\label{romanov}
\begin{split}
&T(t)f(x)\ =\ \dfrac1{\sqrt{4\pi\ t}}
\int_0^{\infty}
\e^{-\frac{y^2}{4t}}\ ([S(y) + S(-y)]f)(x)\ dy \\
&\quad =\ \dfrac1{\sqrt{4\pi\ t}}
\int_0^{\infty}
\e^{-\frac{y^2}{4t}}\ [f(|x^{1-a}+\nu(1-a)y|^{\frac{1} {1-a}}) + f(|x^{1-a}-\nu(1-a)y|^{\frac{1} {1-a}})] dy,
\end{split}
\end{equation}
for any $t>0$, $x\in\R^+$ and $f\in Y_s$. 
\end{theorem}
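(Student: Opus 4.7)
The plan is to assemble the conclusion by chaining the three previously stated results. From Theorem \ref{Thm2-1} we already know that $A$ generates the $(C_0)$ quasicontractive group $S$ on $Y_s$. First I would apply Theorem \ref{Thm2-3} (Romanov) to this group with $X = Y_s$: this immediately yields that $A^2$ generates a $(C_0)$ semigroup $T = \{T(t): t \ge 0\}$ given by \eqref{(3)}, and substituting the explicit expression \eqref{initial2} for $S(y)f$ and $S(-y)f$ into the integrand produces exactly the second line of \eqref{romanov}. Analyticity in the right half plane (equivalently, analyticity of angle $\pi/2$) comes either from Romanov's theorem itself, or alternatively from Theorem \ref{Thm2-2} applied to the polynomial $p(z) = z^2$, where $n = 1$ and $(-1)^{n+1}z^{2n} = z^2$, so the hypothesis is fulfilled.

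Next I would verify the differential expression for $A^2$. Since $A f = \nu x^a f'$, iterating gives
\[
A^2 f = \nu x^a \frac{d}{dx}(\nu x^a f') = \nu^2 x^a(a x^{a-1} f' + x^a f'') = \nu^2\bigl(x^{2a} f'' + a x^{2a-1} f'\bigr),
\]
which matches the claimed action. This computation is valid on any twice continuously differentiable $f$ for which both $Af$ and $A^2 f$ make sense pointwise.

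The remaining point is to identify the domain of the generator produced by Theorem \ref{Thm2-3} with the set
\[
\{f \in \D(A):\ Af \in \D(A),\ A^2 f \in Y_s\}.
\]
The inclusion ``$\subseteq$'' is immediate: if $f$ lies in the Romanov domain, then $f$, $Af$ and $A^2 f$ all live in $Y_s$ with the required regularity, because the generator of $T$ is by construction the square of the generator $A$ of $S$ (so $\D(A^2) = \{f \in \D(A): Af \in \D(A)\}$ in the standard abstract sense). The reverse inclusion follows from the same abstract identification together with the fact established in Theorem \ref{Thm2-1} and \eqref{domain} that membership of a $C^1_0(0,\infty)$ function in $\D(A)$ is determined exactly by $f, Af \in Y_s$.

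The step I expect to require the most care is the domain identification, because \eqref{domain} imposes continuity and vanishing conditions at the boundary $x = 0$, and one has to check that these conditions propagate correctly when forming $A^2$ (in particular that no extra boundary condition at $0$ is needed beyond those already built into $\D(A)$, which is ensured by the extension argument to $\tilde Y_s$ that precedes the theorem). Once this is handled, the explicit representation \eqref{romanov} drops out of \eqref{(3)} by a direct substitution, and no further estimation is needed because the convergence of the integral and analyticity in $t$ are supplied abstractly by Theorem \ref{Thm2-3}.
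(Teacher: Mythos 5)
Your proposal is correct and follows essentially the same route as the paper: generation and analyticity come from Theorems \ref{Thm2-2} and \ref{Thm2-3} applied to the group of Theorem \ref{Thm2-1}, and formula \eqref{romanov} is just the substitution of \eqref{initial2} into Romanov's formula \eqref{(3)}. The extra details you supply (the pointwise computation of $A^2 f$ and the identification of $\D(A^2)$) go beyond the paper's one-line proof but are consistent with it.
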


\begin{proof} Generation  for  $A^2$ follows directly from Theorem \ref{Thm2-2}, 
while formula \eqref{romanov} follows from the definition
of $S(t)$ in \eqref{initial2} 
and Romanov's Theorem (here, Theorem \ref{Thm2-3}).
\end{proof}


\section{The Chaotic Semigroup  on the Space $Y_s$}

In this section we prove that the semigroup associated with 
the operator $A$ is chaotic.  We begin by defining the notion 
of chaos for semigroups and then introduce
an essential tool, the Godefroy-Shapiro Criterion \cite{GS}, 
for proving chaos in our context.

\begin{definition}
Let $T=\{T(t):t\ge 0\}$ be a $(C_0)$ semigroup 
on a separable infinite dimensional Banach space $X$.
The semigroup $T$ is called \textsl{hypercyclic}
if there exists some $f\in X$ whose orbit $\{T(t)f:\, t\ge 0\}$ is dense in $X$.
\end{definition}

\begin{definition}
A semigroup $T$ 
is called \textsl{chaotic} if it is hypercyclic and 
has a dense set of periodic orbits, that is,
\[
P:=\{f\in X: \, \hbox{some $t_0>0$ exists so that} \,  T(t_0)f=f\}
\]
 is dense in $X$.
\end{definition}

\begin{theorem}(Godefroy-Shapiro Criterion \cite{GS})\label{GSC}
Let $L$ be a bounded linear operator on a  separable infinite dimensional Banach space $X$. 
Let  $Q_1$, $Q_2$ be dense subspaces of $X$ 
and let $Z:Q_1\to Q_2$ be so that the following 
properties hold:
\begin{itemize}
\item[$(i)$] $\; LZy = y$ for all $y\in Q_1$,
\item[$(ii)$] $\; \displaystyle \lim_{n\to\infty}Z^n y = 0$ 
for all $y\in Q_1$,
\item[$(iii)$] $\; \displaystyle
\lim_{n\to\infty}L^n x = 0$ for all $x\in Q_2$.
\end{itemize}
Then $L$ is hypercyclic, that is, 
$\{L^n x: \, n\in\N\}$ is dense in $X$ for some $x\in X$.
\end{theorem}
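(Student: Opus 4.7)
The plan is to deduce hypercyclicity of $L$ from \emph{Birkhoff's transitivity theorem}: on a separable Banach space $X$, a bounded linear operator $L$ is hypercyclic if and only if for every pair of nonempty open sets $U,V\subseteq X$ there exists $n\in\N$ with $L^n(U)\cap V\neq\emptyset$. Granting this, the task reduces to establishing topological transitivity directly from the three hypotheses on $L$ and $Z$, with $Z$ understood to map $Q_1$ into itself so that the iterates $Z^n$ are well-defined.

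First I would fix two nonempty open sets $U,V\subseteq X$. By density of $Q_2$ in $X$, pick $x\in U\cap Q_2$; by density of $Q_1$, pick $y\in V\cap Q_1$. The idea is to perturb $x$ by something small that $L^n$ sends exactly onto $y$. Define the auxiliary sequence
\[
w_n\ :=\ x + Z^n y, \qquad n\in\N.
\]
By hypothesis $(ii)$, $Z^n y\to 0$, so $w_n\to x$ and, $U$ being open, $w_n\in U$ for all sufficiently large $n$. Applying $L^n$ and splitting, $L^n w_n = L^n x + L^n Z^n y$. Repeated use of hypothesis $(i)$ gives $L^n Z^n y = L^{n-1}(LZ)Z^{n-1}y = L^{n-1}Z^{n-1}y = \cdots = y$, while hypothesis $(iii)$ gives $L^n x\to 0$. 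Hence $L^n w_n\to y$, and since $V$ is open, $L^n w_n\in V$ for all sufficiently large $n$. For any such $n$ we have $w_n\in U$ and $L^n w_n\in V$, so $L^n(U)\cap V\neq\emptyset$, which is the transitivity condition.

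The main conceptual step is the invocation of Birkhoff's theorem itself, which is a Baire category argument: fix a countable base $\{V_k\}$ of nonempty open sets in $X$ (available since $X$ is separable), observe that the set of hypercyclic vectors is $\bigcap_k \bigcup_n L^{-n}(V_k)$, note that each $\bigcup_n L^{-n}(V_k)$ is open and, under the transitivity condition just proved, dense, and conclude by Baire that the intersection is nonempty. Once this ``black box'' is in hand, the construction above is essentially mechanical; the three hypotheses are employed in precisely the minimal way, $(i)$ to collapse $L^n Z^n$ to the identity on $Q_1$, and $(ii)$, $(iii)$ to carry out the two small-perturbation limits at the source and the target end.
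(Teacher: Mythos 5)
Your argument is correct, but note that the paper does not prove this statement at all: it is quoted verbatim as the Godefroy--Shapiro Criterion and attributed to \cite{GS}, so there is no internal proof to compare against. What you give is the standard self-contained argument: hypotheses $(i)$--$(iii)$ yield topological transitivity (perturb $x\in U\cap Q_2$ by $Z^n y$ with $y\in V\cap Q_1$, use $L^nZ^ny=y$ and the two limits), and Birkhoff's transitivity theorem, proved by the Baire category argument you sketch with the countable base $\{V_k\}$ and the $G_\delta$ set $\bigcap_k\bigcup_n L^{-n}(V_k)$, upgrades transitivity to the existence of a dense orbit; separability and completeness of $X$ are exactly what is needed there. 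You were also right to flag the one genuine wrinkle in the statement itself: as written, $Z$ maps $Q_1$ to $Q_2$, so the iterates $Z^n$ in $(ii)$ are not literally defined; your reading $Z(Q_1)\subseteq Q_1$ is the natural repair, and it is consistent with how the criterion is applied later in the paper, where $Z=h_t(A)^{-1}$ sends each $\varphi_{\lambda}$, $\lambda\in\Omega_1$, to a scalar multiple of itself and hence preserves $Q_1$. One could be slightly more careful at the final step by choosing a single $n$ large enough that simultaneously $w_n\in U$ and $L^nw_n\in V$, which you do implicitly; with that said, the proof is complete and is essentially the textbook route (compare the hypercyclicity criterion in the literature on linear dynamics), just made explicit where the paper relies on a citation.
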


The following lemma holds by the argument in  
\cite[Lemma 2.2]{EGG} with trivial modifications.

\begin{lemma}
The dual space $(Y_{s})^*$ is 
\[
(Y_s)^*=\{\psi \in  \M_{loc}[0,\infty) : \eta (dx)=
(1+\e^{s x^{1-a}})^{-1}(1+\e^{-s x^{1-a}})^{-1} \psi(dx) \in \M[0,\infty)\}
\]
where $\M[0,\infty)$ is the set of all finite complex Borel measures on $[0,\infty)$. 
\end{lemma}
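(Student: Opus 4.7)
The plan is to reduce the lemma to the classical Riesz--Markov representation theorem by absorbing the weight
\[
w(x)\ :=\ (1+\e^{s x^{1-a}})(1+\e^{-s x^{1-a}})
\]
into an isometric identification of $Y_s$ with the standard space $C_0(\R^+) = \{g\in C(\R^+):\lim_{x\to\infty}g(x)=0\}$. Note that $w$ is continuous on $\R^+$ and satisfies $w(x)\ge 4$ (by the arithmetic--geometric mean inequality applied with $A=\e^{sx^{1-a}}$), so it is both bounded away from zero and locally bounded above.

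The first step is to verify that the multiplication map
\[
\Phi : Y_s \to C_0(\R^+), \qquad \Phi(f)(x) := f(x)/w(x),
\]
is a surjective linear isometry. Continuity and strict positivity of $w$ guarantee $\Phi(f)\in C(\R^+)$; the vanishing condition defining $Y_s$ is exactly $\Phi(f)\in C_0(\R^+)$; the norm identity $\|\Phi(f)\|_\infty = \|f\|_s$ is built into the definitions; and surjectivity is immediate since any $g\in C_0(\R^+)$ has preimage $gw \in Y_s$. This is the only step that uses the explicit shape of $w$, and is what the authors mean by a ``trivial modification'' of \cite[Lemma 2.2]{EGG}.

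Next, dualising $\Phi$ gives an isometric isomorphism $\Phi^{*}:(C_0(\R^+))^{*}\to (Y_s)^{*}$. By the classical Riesz--Markov theorem on the locally compact Hausdorff space $[0,\infty)$, $(C_0(\R^+))^{*}$ is identified with $\M[0,\infty)$ through $\eta \mapsto (g\mapsto\int g\,d\eta)$, so every $L\in (Y_s)^{*}$ admits a unique representation
\[
L(f)\ =\ \int_{[0,\infty)} \frac{f(x)}{w(x)}\,\eta(dx), \qquad f\in Y_s,
\]
with $\eta\in\M[0,\infty)$. To match the form in the statement, I would reparametrise by setting $\psi(dx):=w(x)\,\eta(dx)$: since $w$ is continuous and locally bounded, $\psi\in\M_{loc}[0,\infty)$, and since $w$ is strictly positive the assignment $\eta\leftrightarrow\psi=w\eta$ is a bijection between $\M[0,\infty)$ and the class $\{\psi\in\M_{loc}[0,\infty): (1/w)\psi\in\M[0,\infty)\}$ — precisely the set appearing in the lemma.

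I expect no real analytic obstacle: the whole argument is Banach-theoretic, relying only on the isomorphism $Y_s\cong C_0(\R^+)$ and Riesz--Markov. The one piece of book-keeping worth double-checking is that local boundedness and strict positivity of $w$ are exactly what makes the change of measure $\eta\leftrightarrow\psi=w\eta$ a bijection preserving the finiteness conditions in play; both properties are transparent from the explicit formula for $w$, which is why the authors regard the adaptation from \cite{EGG} as routine.
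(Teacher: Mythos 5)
Your route (divide out the weight $w(x)=(1+\e^{s x^{1-a}})(1+\e^{-s x^{1-a}})$ to get an isometric isomorphism $Y_s\cong C_0(\R^+)$, then invoke Riesz--Markov on the locally compact space $[0,\infty)$) is exactly the argument the paper has in mind: its ``proof'' is only the citation of \cite[Lemma 2.2]{EGG}, whose argument is this weighted-isometry reduction, and the ``trivial modification'' is precisely replacing the polynomial weights there by $w$. The isometry, surjectivity, and the representation $L(f)=\int_{[0,\infty)}\frac{f(x)}{w(x)}\,\eta(dx)$ with a unique $\eta\in\M[0,\infty)$ and $\Vert L\Vert=\Vert\eta\Vert$ are all correct as you argue them.

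The one step you should not treat as routine bookkeeping is the final reparametrisation $\psi:=w\,\eta$. Set-theoretically it is indeed a bijection onto $\{\psi\in\M_{loc}[0,\infty):\,w^{-1}\psi\in\M[0,\infty)\}$, so it ``matches the printed formula,'' but it silently changes the duality pairing: under your identification the measure $\psi$ acts on $f$ by $\int f(x)\,w(x)^{-2}\,\psi(dx)$, not by $\int f\,d\psi$. A bare bijection between $(Y_s)^*$ and a class of measures carries no content unless the action is specified, and the natural action (plain integration against $\psi$) corresponds to the \emph{opposite} condition: $L(f)=\int f\,d\psi$ defines a bounded functional on $Y_s$ exactly when $w\,\psi(dx)\in\M[0,\infty)$, with $\Vert L\Vert=\int w\,d|\psi|$. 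This is also the convention the paper itself uses later: in the proof of Lemma 4.2 the functional $c\delta_x$ acts by $f\mapsto cf(x)$ and the normalisation $c=1/w(x_*)$ gives $\Vert c\delta_{x_*}\Vert_{(Y_s)^*}=1$, which is consistent with $\Vert\psi\Vert_{(Y_s)^*}=\int w\,d|\psi|$ and not with $\Vert w^{-1}\psi\Vert$. So the inverted weights in the displayed statement are best read as a misprint; your proof of the substantive identification $(Y_s)^*\cong\M[0,\infty)$ is fine, but a complete write-up should state the representation formula and pairing explicitly (and then the membership condition comes out as $w\,\psi\in\M[0,\infty)$), rather than force agreement with the formula as printed.
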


We are now ready to apply these tools to our setting.

Denote by 
\[
	\mathscr{S}_{s}\
	=\ \{\lambda\in\C: \, 0< Re(\lambda) < s (1-a) \nu \}
\]
the open strip in $\C$ and take
\[
\varphi_{\lambda}(x)\ =\ \e^{\frac{\lambda x^{1-a}}{\nu (1-a)}}, 
\quad x\ge 0.
\]
Then $\varphi_{\lambda}$ is well defined 
and in $C(\R^+)$ for any $\lambda\in \mathscr{S}_{s}$
(in fact for any $\lambda\in\C$).

\begin{lemma}\label{4-2}
 The mapping $\lambda\mapsto \varphi_{\lambda}$ is analytic 
 from the strip $\mathscr{S}_s$ 
 to $Y_s$. 
\end{lemma}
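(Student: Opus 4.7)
The plan is to prove complex Fréchet differentiability of $\lambda \mapsto \varphi_\lambda$ at each $\lambda_0\in \mathscr{S}_s$, with candidate derivative
\[
\psi_{\lambda_0}(x) \ :=\ \frac{x^{1-a}}{\nu(1-a)}\,\varphi_{\lambda_0}(x).
\]
First I would check that $\varphi_\lambda \in Y_s$ (and $\psi_{\lambda_0}\in Y_s$) for every $\lambda \in \mathscr{S}_s$, which is immediate from $|\varphi_\lambda(x)| = \e^{Re(\lambda)\,x^{1-a}/(\nu(1-a))}$ and the definition of the weight: since $0 < Re(\lambda)/(\nu(1-a)) < s$, the quotient $|\varphi_\lambda(x)|/((1+\e^{sx^{1-a}})(1+\e^{-sx^{1-a}}))$ decays like $\e^{-(s - Re(\lambda)/(\nu(1-a)))\,x^{1-a}}$ as $x\to\infty$, and inserting the extra factor $x^{1-a}$ does not spoil that decay.

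For the derivative itself, I would use the factorization $\varphi_{\lambda_0+h} = \varphi_{\lambda_0}\,\e^u$ with $u = u(x,h) := h\,x^{1-a}/(\nu(1-a))$, so that
\[
\frac{\varphi_{\lambda_0+h}(x)-\varphi_{\lambda_0}(x)}{h}\ -\ \psi_{\lambda_0}(x)\ =\ \varphi_{\lambda_0}(x)\cdot\frac{x^{1-a}}{\nu(1-a)}\cdot\frac{\e^{u}-1-u}{u}.
\]
The elementary Taylor bound $|\e^u - 1 - u|\le \tfrac12|u|^2 \e^{|u|}$ then yields the pointwise estimate
\[
\left|\frac{\varphi_{\lambda_0+h}(x)-\varphi_{\lambda_0}(x)}{h} - \psi_{\lambda_0}(x)\right|\ \le\ \frac{|h|}{2(\nu(1-a))^2}\,x^{2(1-a)}\,\e^{(Re(\lambda_0)+|h|)\,x^{1-a}/(\nu(1-a))}.
\]
Dividing by $(1+\e^{sx^{1-a}})(1+\e^{-sx^{1-a}}) \ge \e^{sx^{1-a}}$ produces an exponent of the form $-(s - (Re(\lambda_0)+|h|)/(\nu(1-a)))\,x^{1-a}$; for $|h|$ small enough that $Re(\lambda_0)+|h| < s\nu(1-a)$, this coefficient is strictly positive, so $\sup_{x>0} x^{2(1-a)}\e^{-c\,x^{1-a}}$ is finite for the resulting $c>0$, and the $Y_s$-norm of the difference quotient minus $\psi_{\lambda_0}$ is $O(|h|)$ as $h\to 0$. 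Since $h$ is complex throughout this calculation, this establishes complex differentiability at $\lambda_0$, and as $\lambda_0 \in \mathscr{S}_s$ was arbitrary, $\lambda \mapsto \varphi_\lambda$ is holomorphic on the strip.

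The main technical point is the interplay between the polynomial factor $x^{2(1-a)}$ generated by differentiating the exponent and the exponential weight in the $Y_s$-norm: the strict inequality $Re(\lambda_0) < s\nu(1-a)$ is exactly what leaves a positive exponential-decay reserve to absorb that polynomial blow-up. The admissible radius is essentially $|h| < s\nu(1-a) - Re(\lambda_0)$, which shrinks as $\lambda_0$ approaches the right edge of $\mathscr{S}_s$, so holomorphy must be verified at each $\lambda_0$ separately rather than through a single uniform estimate over the whole strip.
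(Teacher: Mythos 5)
Your proof is correct, but it takes a genuinely different route from the paper. The paper does not verify norm differentiability at all: it invokes the standard fact that weak analyticity implies (norm) analyticity for Banach-space-valued functions, checks that $\lambda\mapsto\langle\varphi_\lambda,\delta_x\rangle=\e^{\lambda x^{1-a}/(\nu(1-a))}$ is entire for each Dirac functional $\delta_x$ in a norm-determining subset of $(Y_s)^*$, and separately verifies $\varphi_\lambda\in Y_s$ from $Re(\lambda)<s\nu(1-a)$. You instead prove strong (Fr\'echet) holomorphy directly: you exhibit the candidate derivative $\psi_{\lambda_0}(x)=\frac{x^{1-a}}{\nu(1-a)}\varphi_{\lambda_0}(x)$, use the factorization $\varphi_{\lambda_0+h}=\varphi_{\lambda_0}\e^{u}$ with the Taylor bound $|\e^{u}-1-u|\le\tfrac12|u|^2\e^{|u|}$, and absorb the resulting polynomial factor $x^{2(1-a)}$ into the exponential-decay reserve $s\nu(1-a)-Re(\lambda_0)-|h|>0$. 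Your approach is more elementary and self-contained (no appeal to the weak-equals-strong analyticity theorem or to the description of the dual space), and it yields the explicit derivative together with a quantitative $O(|h|)$ rate; the paper's approach is shorter modulo the cited functional-analytic machinery and sidesteps all hands-on estimates. One small point worth making explicit in your write-up: the constant $\sup_{x>0}x^{2(1-a)}\e^{-cx^{1-a}}$ depends on $|h|$ through $c$, so to get a genuine $O(|h|)$ bound you should fix $\delta>0$ with $Re(\lambda_0)+\delta<s\nu(1-a)$ and use the uniform constant corresponding to $c_0=s-(Re(\lambda_0)+\delta)/(\nu(1-a))$ for all $|h|\le\delta$ — which is implicit in your final remark about the admissible radius, and is not a gap.
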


\begin{proof}
Since weak analyticity is equivalent to analyticity, 
it suffices to show weak analyticity, i.e., to show that
\[
\lambda\ \mapsto\ \int_0^\infty \varphi_{\lambda}(x) \ \psi (dx)
\]
is analytic for any $\psi\in F \subseteq \M[0,\infty)$, 
where $F$ is any norm determining subset of the
dual space $(Y_{s})^*$. Here we use 
\[
F\ :=\ \{c\delta_x:\, c\in\C, \, x\in [0,\infty)\},
\]
where $\delta_x$ denotes the Dirac point mass measure at $x$.
Note that
\[
\Vert f\Vert_s\ =\ 
\sup\big\{|c f(x)|=|\langle f,\psi \rangle|: \, 
\psi = c\delta_x, \, c\in\C, \, x\in [0,\infty), 
\, \Vert\psi\Vert_{(Y_{s})^*}=1\big\}.
\]
A choice of $c$ that works is
\[
c\ =\ \frac{1}{(1+\e^{sx_*^{1-a}})\ (1+ \e^{-s x_*^{1-a}})},
\]
where $x_*$ is a supremum point for $f$, i.e.,
\[
\Vert f\Vert_{s}\ =\ |c f(x_*)|.
\] 
Since 
$\varphi_{\lambda}(x) = \langle\varphi_{\lambda}, \delta_x\rangle$
for any $x \in \R^+$, we have that
\[
\lambda\ \mapsto\ 
\e^{\frac{\lambda}{\nu (1-a)} x^{1-a}}\
=\ \langle\varphi_{\lambda},\delta_x\rangle
\]
is an entire function of $\lambda\in\C$ for all $x \ge 0$.
For $\lambda\in \mathscr{S}_s$, we have that
\[
\begin{split}
\lim_{x\to\infty}\frac{|\varphi_\lambda(x)|}
{(1+\e^{s x^{1-a}})\ (1+\e^{-s x^{1-a}})}\ 
&=\ 
\lim_{x\to\infty}\frac{\e^{\frac{Re(\lambda)}{\nu (1-a)} x^{1-a}}}
{(1+\e^{s x^{1-a}})\ (1+\e^{-s x^{1-a}})}\\
&=\ 
\lim_{x\to\infty} \e^{(\frac{Re(\lambda)}{\nu (1-a)} - s) x^{1-a}}\ 
=\ 0
\end{split}
\]
since $Re(\lambda) <s (1-a)\nu$.
Hence $\varphi_{\lambda}\in Y_{s}$.  
Here we used the obvious fact that if $\varphi_{\lambda}\in Y_{s}$, 
then $\varphi_{\lambda}\in Y_{\tau}$ for all $\tau\ge s$.
\end{proof} 

Consider the polynomial 
\begin{equation}\label{poly}
p(z) = z^2 + \beta z + \gamma
\end{equation} for any given $\beta$, $\gamma \in\R$.
The  operator $A$ 
generates $S ={ \{S(t): t\in \R}\}$, a $(C_0)$  
quasicontractive group on $Y_s$. We define 
the  operator
\[
B := p(A) = A^2 + \beta A + \gamma I. 
\]
By Theorem \ref{Thm2-2}, $B$ generates 
an analytic semigroup of angle $\frac{\pi}{2}$ on $Y_s$. 
The semigroup $T=\{T(t):t\in \C, Re(t)>0\} \cup \{0\}$ 
generated by $B$ is given by
\[
T(t)\ :=\ h_t(A),
\]
where 
\begin{equation}\label{exp}
h_t(z) = \e^{t p(z)}, \quad z \in \C,\; Re(t) \ge 0,
\end{equation}
with $p(z)$ as in \eqref{poly}.
 	
Our goal is to prove  $T$ is chaotic on $Y_s$ for some $s>0$.

\begin{lemma}\label{4-3} 
Suppose that there exists an open connected set 
$\Omega\subset \mathscr{S}_s$ 
which has an accumulation point in $\mathscr{S}_s$.
Then the subspace
\[
Q\ =\ {\rm span}\{\varphi_{\lambda}: \ \lambda\in\Omega\}
\]
is dense in $Y_s$.
\end{lemma}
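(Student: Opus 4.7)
The plan is to use a Hahn--Banach argument combined with the identity theorem for analytic functions. It suffices to show that every $\psi\in(Y_s)^*$ which annihilates each $\varphi_\lambda$ with $\lambda\in\Omega$ must vanish identically; the density of $Q$ in $Y_s$ then follows by Hahn--Banach.

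So I would fix such a $\psi$ and set $F(\lambda):=\langle\varphi_\lambda,\psi\rangle$ for $\lambda\in\mathscr{S}_s$. By Lemma~\ref{4-2}, the map $\lambda\mapsto\varphi_\lambda$ is analytic from $\mathscr{S}_s$ into $Y_s$, so $F$ is a scalar analytic function on $\mathscr{S}_s$. Since $F$ vanishes on $\Omega$ and $\Omega$ has an accumulation point in the connected open set $\mathscr{S}_s$, the identity theorem gives $F\equiv 0$ on all of $\mathscr{S}_s$.

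Next, I would invoke the preceding lemma to represent $\psi$ via a finite complex Borel measure $\mu$ on $[0,\infty)$, so that pairing against $f\in Y_s$ amounts to integrating $f/w$ against $\mu$, where $w(x):=(1+\e^{sx^{1-a}})(1+\e^{-sx^{1-a}})$. The identity $F\equiv 0$ then reads
\[
\int_0^\infty \frac{\e^{\lambda x^{1-a}/(\nu(1-a))}}{w(x)}\,\mu(dx)\ =\ 0 \qquad \text{for all } \lambda\in\mathscr{S}_s.
\]
Pushing this forward under the change of variables $y=x^{1-a}/(\nu(1-a))$ and absorbing the Jacobian into a new measure, this becomes $\int_0^\infty \e^{\lambda y}\,d\sigma(y)=0$ for all $\lambda\in\mathscr{S}_s$, where $\sigma$ is a finite complex Borel measure on $[0,\infty)$ (finiteness uses $1/w(x)\le \e^{-sx^{1-a}}$ at infinity).

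Finally, I would fix any $c\in(0,s\nu(1-a))$ and restrict to the vertical line $\mathrm{Re}(\lambda)=c$: writing $\lambda=c+i\tau$, the identity becomes
\[
\int_0^\infty \e^{i\tau y}\,\e^{c y}\,d\sigma(y)\ =\ 0 \qquad \text{for all } \tau\in\R,
\]
i.e.\ the Fourier transform of the finite complex measure $\e^{c y}\,d\sigma$ is identically zero. Uniqueness of the Fourier transform forces $\e^{c y}\,d\sigma=0$, whence $\sigma=0$, and undoing the change of variables and the dual pairing yields $\psi=0$, as required. The main obstacle, in my view, is the careful bookkeeping between $(Y_s)^*$, the representing measure, and the change of variables: one must verify that $\e^{c y}\,|\sigma|$ is genuinely finite so that Fourier uniqueness applies, and this is exactly what the choice of the width $s\nu(1-a)$ of the strip $\mathscr{S}_s$ is designed to guarantee. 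Everything else is fairly standard once the analytic continuation of $F$ across $\mathscr{S}_s$ is in place.
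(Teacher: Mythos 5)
Your argument is correct and is essentially the paper's proof: both reduce density to showing that any $\psi\in (Y_s)^*$ annihilating $\{\varphi_\lambda:\ \lambda\in\Omega\}$ must vanish, using the analyticity of $\lambda\mapsto\langle\varphi_\lambda,\psi\rangle$ from Lemma~\ref{4-2} together with the identity theorem to propagate the vanishing from $\Omega$ to all of $\mathscr{S}_s$, and then Hahn--Banach. The only difference is that the paper simply asserts the final implication ``$\langle\varphi_\lambda,\psi\rangle=0$ for all $\lambda\in\mathscr{S}_s$ implies $\psi\equiv 0$'', whereas you justify it explicitly (and correctly) via the measure representation of $(Y_s)^*$, the change of variables $y=x^{1-a}/(\nu(1-a))$, and uniqueness of the Fourier transform of the finite measure $\e^{c y}\,d\sigma$ along a vertical line $\mathrm{Re}(\lambda)=c$ inside the strip --- a detail worth having on record.
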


\begin{proof} 
Suppose $\psi\in Q^{\perp}$. 
Since $\varphi_{\lambda}\in Y_s$ and $\psi\in (Y_s)^*$, 
it follows that 
\[
q(\lambda)\ =\ \langle\varphi_{\lambda},\psi\rangle
\]
is well defined and analytic in $\mathscr{S}_s$. 
But $q(\lambda)=0$ for any $\lambda\in\Omega$ 
and $\Omega$ has an accumulation point  of such points $\lambda$ in $\mathscr{S}_s$. 
Hence, $q(\lambda)=0$ in all of $\mathscr{S}_s$ 
by the analytic continuation theorem, and so $\psi \equiv 0$.
This proves the assertion.
\end{proof}

\begin{lemma}\label{4-4} 
Let $\mathbb T$ be the unit circle in $\C$ and consider 
the polynomial $p(z)$ in \eqref{poly} with $\beta$, $\gamma \in \R$,
and $h_t(z)$ as in \eqref{exp} for any fixed $t >0$.
Then
\begin{equation}\label{(3.7)}
h_t(\mathscr{S}_s)\cap \mathbb{T}\neq\emptyset
\end{equation}
and $h_t(\mathscr{S}_{s})\cap \mathbb T$ 
has infinitely many accumulation points in $\mathscr{S}_s$ for $s\ >\ s^*$ where
\begin{equation}\label{critical}
\quad s^* \ :=\ \left\{ 
\begin{array}{ll}
	\frac{-\beta + \sqrt{\beta^2-4\gamma}}{2\nu(1-a)} 
	&\hbox{if $\gamma < \frac{\beta^2}{4}$ and 
		$\ \sqrt{\beta^2-4\gamma} \ge \beta$,}\\
0 &\hbox{otherwise.}
\end{array}
\right.
\end{equation}
\end{lemma}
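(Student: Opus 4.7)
\textbf{Proof plan for Lemma \ref{4-4}.}

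The plan is to reduce $|h_t(\lambda)| = 1$ to a simple real-algebraic condition and then analyse a quadratic inequality on the interval $(0, s\nu(1-a))$. Writing $\lambda = x + i y$ with $0 < x < s\nu(1-a)$ and $y \in \R$, I compute
\[
p(\lambda) \;=\; (x^{2} - y^{2} + \beta x + \gamma) \;+\; i\,y\,(2x + \beta),
\]
so that $|h_t(\lambda)| = \e^{t\,\mathrm{Re}\,p(\lambda)}$ and the condition $h_t(\lambda) \in \mathbb{T}$ is equivalent to
\[
y^{2} \;=\; g(x) \;:=\; x^{2} + \beta x + \gamma, \qquad x \in (0, s\nu(1-a)).
\]
Hence the question reduces to finding $x$ in this interval with $g(x) \ge 0$ (and in fact with $g(x) > 0$ if I want genuinely non-real $\lambda$).

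Next I would do a case analysis on the discriminant $\Delta = \beta^{2} - 4\gamma$, producing exactly the formula \eqref{critical}. If $\Delta \le 0$, then $g \ge 0$ everywhere and is positive except possibly at the single point $x = -\beta/2$, so any $s > 0 = s^*$ suffices. If $\Delta > 0$ and $\sqrt{\Delta} < \beta$ (which forces $\beta > 0$, $\gamma > 0$), the two roots $r_\pm = (-\beta \pm \sqrt{\Delta})/2$ of $g$ are both negative, so $g > 0$ on $(0,\infty)$ and again $s^* = 0$ works. The remaining case is $\Delta > 0$ with $\sqrt{\Delta} \ge \beta$: then $r_+ = \frac{-\beta+\sqrt{\Delta}}{2} \ge 0$, and $g > 0$ exactly on $(r_+, \infty)$ within the positive reals. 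The requirement that $(r_+, \infty)$ meet $(0, s\nu(1-a))$ is $s\nu(1-a) > r_+$, i.e.\ $s > s^* = r_+/(\nu(1-a))$, which matches \eqref{critical}.

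For $s > s^*$, I pick an open subinterval $I \subset (0, s\nu(1-a))$ on which $g(x) > 0$, and form the real-analytic arc
\[
\Gamma \;:=\; \{\, x + i\sqrt{g(x)} \,:\, x \in I \,\} \;\subset\; \mathscr{S}_s.
\]
By construction $h_t(\Gamma) \subset \mathbb{T}$, proving \eqref{(3.7)}. To get the accumulation statement, note that on $\Gamma$
\[
h_t\bigl(x + i\sqrt{g(x)}\bigr) \;=\; \exp\!\bigl(i\,t\,\sqrt{g(x)}\,(2x + \beta)\bigr),
\]
and the phase $\phi(x) = \sqrt{g(x)}(2x+\beta)$ is non-constant on $I$: its square $g(x)(2x+\beta)^{2}$ is a degree-four polynomial, hence cannot be constant on an open interval. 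Therefore $\phi(I)$ contains a nondegenerate real interval, so $h_t(\Gamma)$ is a nontrivial arc of $\mathbb{T}$. Every point of $\Gamma$ is an accumulation point of $\Gamma$ in $\mathscr{S}_s$, so the preimage set $\{\lambda \in \mathscr{S}_s : h_t(\lambda) \in \mathbb{T}\}$ has infinitely many accumulation points in $\mathscr{S}_s$, as required.

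The main obstacle is the bookkeeping in the case split that produces the exact formula \eqref{critical}, especially verifying that the boundary condition $\sqrt{\Delta} \ge \beta$ is precisely what distinguishes the regime $s^* > 0$ from $s^* = 0$; the analytic part (producing the arc and ruling out constancy of $\phi$) is then a short computation.
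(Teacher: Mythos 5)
Your proposal is correct and follows essentially the same route as the paper: both reduce $h_t(\lambda)\in\mathbb{T}$ to $\mathrm{Re}\,p(\lambda)=0$, i.e.\ $y^{2}=x^{2}+\beta x+\gamma$ with $0<x<s\nu(1-a)$, and then run the discriminant case analysis that yields \eqref{critical}. Two small remarks: your claim that $g>0$ ``exactly on $(r_+,\infty)$'' among positive reals fails when both roots are positive (e.g.\ $\beta<0<\gamma$ with $\beta^{2}>4\gamma$), but this does not harm the sufficiency statement since $s\nu(1-a)>r_+$ still produces admissible $x$; and your explicit non-constancy argument for the phase $\sqrt{g(x)}(2x+\beta)$ actually makes the accumulation-point conclusion more precise than the paper's one-line assertion.
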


\begin{proof} 
Showing \eqref{(3.7)} is equivalent to finding 
$z\in \mathscr{S}_{s}$  with
$|\e^{t \ p(z)}|=1$, 
that is,  finding $z\in \mathscr{S}_s$ such that
$Re(p(z))=0$. If $z = x+i y$ with $x = Re(z)$ and $y= Im(z)$, 
from \eqref{(3.7)} we need
\[
0=\ Re(p(z))\ =\ Re(x^2- y^2 + 2i x  y+ \beta (x+i y) + \gamma)\ 
=\ x^2 - y^2 + \beta x + \gamma.
\]
So to prove \eqref{(3.7)} we need to find a $(x,y_0) \in \R^2$
such that 
\[
 \left\{ 
\begin{array}{l}
{y_0}^2 = x^2 + \beta x + \gamma\\
$with$\  0 < x < s (1-a) \nu,
\end{array}
\right.
\]
in other words, we must find a pair $(x,y)$ with $0<x<s\nu (1-a)$ and $y>0$. 
To that end we must investigate when the
graph of parabola 
\[
y = x^2 + \beta x + \gamma
\]
intersects the portion of the strip 
$\mathscr{S}_s$ above the real axis in $\C$.  

Since the parabola opens upward, if $\beta ^{2}-4\gamma <0$, 
the parabola never crosses the real axis and so the 
parabola will always have a segment that intersects 
$\mathscr{S}_s$ above the real axis.  

If  $\beta ^{2} =4\gamma$, the vertex of the parabola
lies on the real axis and hence a portion of the parabola must intersect our strip in the positive half-plane. 

In the case  $\beta ^{2}-4\gamma >0$, the parabola intersects 
the real axis at 
\[
x_{\pm}\ :=\ \frac{- \beta \pm \sqrt{\beta^2 - 4\gamma}}{2}.
\]
Clearly $ x_{-} \le x_+$.
\begin{center}
	\includegraphics[height=4cm]{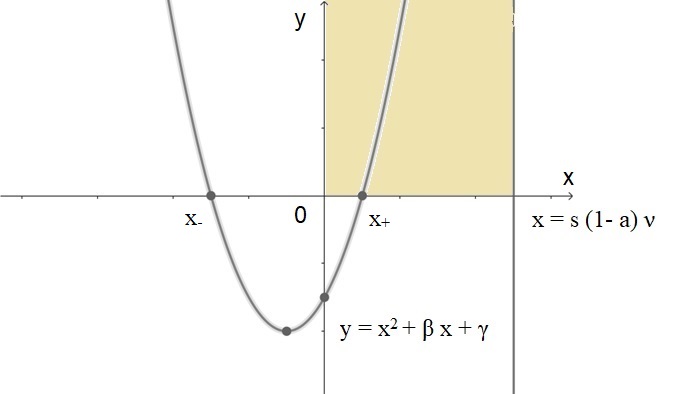}
\end{center}
If $x_{-}>0$, there is a portion intersecting $\mathscr{S}_s$ in the upper half plane.  
If $x_{-} \le0$, we must choose $s$ so large 
that 
\[
x_{+}<s(1-a)\nu.
\]
that is, choose
\[
s\ >\ \frac{- \beta + \sqrt{\beta^2 - 4\gamma}}{2 \nu (1-a)}.
\]
With that choice of $s$, a portion of the parabola intersects $\mathscr{S}_s$ 
in the upper half plane.

The proof of the lemma is now complete by noting once a portion of the parabola 
is in $\mathscr{S}_s \cap \mathbb{T}$,   $h_t(\mathscr{S}_s)\cap \mathbb T$ has 
infinitely many accumulation points.
\end{proof}

We are now ready to state and prove our main result.

\begin{theorem}\label{Thm4-1}
Let $\beta$, $\gamma \in \R$, $B=A^2+\beta A+\gamma I$ where $A=\nu x^a \frac{d}{dx}.$
Then the semigroup generated by to the operator $B$, 
 $T = \{T(t): t\ge 0\}$, governing the problem
 \[
\frac{\partial u}{\partial t}\ =\nu^{2} x^{2a}\frac{\partial^2 u}{\partial x^2}+
\nu^{2}ax^{2a-1}\frac{\partial u}{\partial x}
+ \beta \nu x^a\frac{\partial u}{\partial x} + \gamma u,
\quad x>0,\, t\ge 0,
\] 
is chaotic on $Y_{s}$ if $ s > s^*$, with $s^*$ as in \eqref{critical}.
\end{theorem}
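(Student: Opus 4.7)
The plan is to apply the Godefroy--Shapiro Criterion (Theorem \ref{GSC}) to the single bounded operator $L := T(t_0)$ for a fixed $t_0>0$, using the exponential functions $\varphi_\lambda$ as a complete family of eigenvectors. A direct computation gives $\varphi_\lambda'(x)=\frac{\lambda x^{-a}}{\nu}\varphi_\lambda(x)$, hence $A\varphi_\lambda=\lambda\varphi_\lambda$ and $B\varphi_\lambda=p(\lambda)\varphi_\lambda$, so that
\[
T(t)\varphi_\lambda \ =\ h_t(\lambda)\varphi_\lambda \ =\ \e^{tp(\lambda)}\varphi_\lambda
\]
for every $\lambda\in\mathscr{S}_s$ (membership $\varphi_\lambda\in Y_s$ being supplied by Lemma \ref{4-2}).

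By Lemma \ref{4-4}, for $s>s^*$ the parabola $\{Re(p(z))=0\}$ crosses $\mathscr{S}_s$ along a nontrivial arc. This arc separates two nonempty open regions
\[
\Omega_- \ =\ \{\lambda\in\mathscr{S}_s :\, Re(p(\lambda))<0\},
\qquad
\Omega_+ \ =\ \{\lambda\in\mathscr{S}_s :\, Re(p(\lambda))>0\},
\]
on which $|h_{t_0}(\lambda)|<1$ and $|h_{t_0}(\lambda)|>1$ respectively. Each contains an open ball and hence an accumulation point in $\mathscr{S}_s$, so Lemma \ref{4-3} yields that $Q_1:={\rm span}\{\varphi_\lambda:\lambda\in\Omega_+\}$ and $Q_2:={\rm span}\{\varphi_\lambda:\lambda\in\Omega_-\}$ are dense in $Y_s$. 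Define $Z$ on $Q_1$ by linear extension of $Z\varphi_\lambda:=h_{t_0}(\lambda)^{-1}\varphi_\lambda$. Then $LZ\varphi_\lambda=\varphi_\lambda$ (condition (i)), $Z^n\varphi_\lambda=h_{t_0}(\lambda)^{-n}\varphi_\lambda\to 0$ in $Y_s$ since $|h_{t_0}(\lambda)|>1$ on $\Omega_+$ (condition (ii)), and $L^n\varphi_\lambda=h_{t_0}(\lambda)^n\varphi_\lambda\to 0$ for $\lambda\in\Omega_-$ (condition (iii)). Theorem \ref{GSC} then gives hypercyclicity of $L=T(t_0)$; since any dense $L$-orbit is automatically a dense orbit of the semigroup $T$, the semigroup is hypercyclic.

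For the dense set of periodic points, consider
\[
E \ =\ \{\lambda\in\mathscr{S}_s :\, p(\lambda)\in 2\pi i\,\mathbb{Q}\}.
\]
Such $\lambda$ lie on the parabola $Re(p)=0$, and on the arc of that parabola inside $\mathscr{S}_s$ the continuous nonconstant function $Im(p(\cdot))$ sweeps out an open interval of values, so $E$ is a countable dense subset of the arc and thus has accumulation points in $\mathscr{S}_s$. The analytic-continuation argument inside the proof of Lemma \ref{4-3} uses only the existence of such an accumulation point, not openness of $\Omega$, and therefore applies verbatim to $E$: the subspace $P_0:={\rm span}\{\varphi_\lambda:\lambda\in E\}$ is dense in $Y_s$. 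Any $f=\sum_{j=1}^k c_j\varphi_{\lambda_j}\in P_0$, with $p(\lambda_j)=2\pi i\, m_j/n_j$ in lowest terms, satisfies $T(N)f=f$ where $N$ is the least common multiple of $n_1,\dots,n_k$, so $P_0$ consists entirely of periodic vectors and $T$ is chaotic.

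I expect the only delicate point to be this last step: the set $E$ of periodic parameters is not open, so Lemma \ref{4-3} cannot be quoted as stated and one must re-run its analytic-continuation argument on the countable dense subset $E$ of the parabolic arc. The hypercyclicity half of the theorem is essentially bookkeeping once the eigenvalue identity $T(t)\varphi_\lambda=h_t(\lambda)\varphi_\lambda$ and the geometric picture of the parabola crossing $\mathscr{S}_s$ (Lemma \ref{4-4}) are in hand.
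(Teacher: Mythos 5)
Your proposal is correct and follows essentially the same route as the paper's own proof: eigenvectors $\varphi_\lambda$ with $T(t)\varphi_\lambda=h_t(\lambda)\varphi_\lambda$, the Godefroy--Shapiro Criterion (Theorem \ref{GSC}) applied to a fixed $T(t_0)$ with the dense spans supplied by Lemmas \ref{4-3} and \ref{4-4}, and periodic points coming from parameters whose eigenvalues are rational rotations. The only differences are refinements of presentation---you verify the eigenvalue identity explicitly and note that the analytic-continuation argument of Lemma \ref{4-3} needs only an accumulation point (not openness) for the non-open periodic-parameter set, a point the paper passes over silently.
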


\begin{proof} Recall that
\[
T(t)=h_{t}(A)=\e^{tp(A)}
\]
where $p(z)$ is the polynomial in \eqref{poly}.

The proof is organized in two steps.

\emph{Step 1: $T$ is hypercyclic.} Let $s > s^*$ and fix any $t > 0$. Define the sets
\[
\begin{split}
\Omega_1 & =\ \{\lambda\in \mathscr{S}_{s}:\, 
|h_t(\lambda)|>1\},\\
\Omega_2 &=\ \{\lambda\in \mathscr{S}_{s}:\, 
|h_t(\lambda)|<1\}.	
\end{split}
\]
Consider the set 
\[
Q_j\ =\ {\rm span} \{\varphi_{\lambda} \in Y_s : \, \lambda\in\Omega_j\}, \quad j \in \{1,2\}.
\]
Let $z_0 \in h_t(\mathscr{S}_s)\cap \mathbb T$.  
Since $h_t$ is analytic and nonconstant, $h_t(\mathscr{S}_s)$ is an open set.
Then 
\[
\begin{split}
\Omega_1 & =\ h_t(\mathscr{S}_{s}) \cap  
	\{z\in\C:\, |z|>1\},\\
	\Omega_2 &=\ h_t(\mathscr{S}_{s}) \cap  
	\{z\in\C:\, |z| < 1\}	
\end{split}
\] 
are both open connected sets also. Thus any point in $\Omega_j$ is an accumulation point for $j=1,2.$ 
By Lemma \ref{4-3} it follows that $Q_j$ for $j=1,2$ is dense.  

We want to apply the Godefroy-Shapiro Criterion (here Theorem \ref{GSC})
with $L=h_{t}(A)$. 
Define 
\[
Z=h_{t}(A)^{-1}
\]
on $Q_1$.
Then
\[
Z(\sum_{j=1}^N\alpha_j\varphi_{\lambda_j})=\sum _{j=1}^N\alpha_j(h_{t}(\lambda_j))^{-1} \varphi_{\lambda_j},
\]
for $\lambda_j\in\Omega_j$, $\alpha_j\in\C$ and $N\in\N.$  
It is clear that for any 
$y= \sum _{j=1}^N\alpha_j \varphi_{\lambda_j} \in Q_1$ , we have
$LZy=y$. Furthermore, for 
$\lambda_j\in\Omega_1$, $|h_t(\lambda_j)|>1$ and  so
\[
\lim_{n\to\infty}Z^ny=\lim_{n\to\infty}\sum _{j=1}^N\alpha_j(h_{t}(\lambda_j))^{-n}\varphi_{\lambda_j}=0.
\]
For $y=\sum_{j=1}^N\alpha_j\varphi_{\lambda_j}\in Q_2$, we have $|h_{t}(\lambda_j)|<1$ for each $j$ and so
\[
\lim_{n\to\infty}L^n y=\lim_{n\to\infty}\sum _{j=1}^N\alpha_j(h_{t}(\lambda_j))^{n}\varphi_{\lambda_j}=0.
\]
The hypotheses of Godefroy-Shapiro are then satisfied and so $L$, that is, $T(t)$ 
for each $t$ with $Re(t) >0$, is hypercyclic.

\emph{Step 2: $T$ is chaotic.} 
To see that $L=h_{t}(A)$ is chaotic (and of course recalling we fix any $t>0$),
set
\[
\Omega_3=\{\lambda\in \mathscr{S}_{s}:\, h_{t}(\lambda)\in e^{2\pi i\mathbb Q}\},
\] 
and
\[
Q_3=span\{\varphi_{\lambda}:\, \lambda\in\Omega_3\}.
\]
Then $Q_3$ is contained in the span of all periodic orbits of $L$ with  
period a rational multiple of $2\pi$. Let $\lambda _{j} \in Q_3$; suppose
\[
h_t( \lambda_j)=e^{2\pi i \frac{n_j}{m_j}}.
\]
Then for $y=\sum_{j=1}^N\alpha_j\varphi_{\lambda_j}$, 
and $m=\Pi_{j=1}^N m_j,$ we clearly have
\[
h_{t}(A)^m y=\sum_{j=1}^N[e^{2\pi i n_j \widehat{m}_j}]\alpha_j \varphi_{\lambda_j}=y
\]
(where $\widehat{m}_j=(\Pi_{k\neq j} m_k)$).
So the set of all periodic orbits of $h_{t}(A)$ is dense. Since $t>0$ was fixed 
but otherwise arbitrary, we have shown the semigroup $T$ is chaotic.
\end{proof}


\section*{Acknowledgements}
The author Anna Maria Candela acknowledges the partial support of
the following research funds:
MIUR-PRIN Research Project 2017JPCAPN
“\emph{Qualitative and quantitative aspects of nonlinear PDEs}”, 
MUR-PNRR project code CN00000013 “\emph{National Centre for HPC, Big Data and Quantum Computing}” 
- Spoke 10 “\emph{Quantum Computing}” 
(Mission 4 Component 2 Investment 1.4, the European Union – NextGenerationEU), 
\textsl{Università di Bari Aldo Moro} Horizon Europe Seeds Project S51 
“\emph{STEPS: STEerability and controllability of PDES in Agricultural and Physical models}”. \\
The authors Jerome A. Goldstein and Gisèle Ruiz Goldstein 
acknowledge the partial support of \textsl{Università di Bari Aldo Moro} 
Research Funds "\emph{Visiting Researcher/Professor 2022}".  \\
The author Silvia Romanelli acknowledges the partial support of
the GNAMPA-INdAM Project 2022 
"\emph{Anomalous diffusion and its applications to fractal domains, Physics 
and Mathematical Finance}".

%
%
%

\end{document}